\newtheorem{theorem}{Theorem}[section]
\newtheorem{proposition}{Proposition}[section]
\newtheorem{lemma}{Lemma}[section]
\def\l{\lambda}
\def\<{\langle}
\def\>{\rangle}
\def \ds{\displaystyle}
\def \vs{\vspace*{0.1cm}}
\begin{document}

\title[An optimal anisotropic Poincar\'e inequality ]
{An optimal anisotropic Poincar\'e inequality for
convex domains}
%[Finsler Liouville equation]
\author{Guofang Wang}
\address{ Albert-Ludwigs-Universit\"at Freiburg,
Mathematisches Institut,
Eckerstr. 1,
79104 Freiburg, Germany}
\email{guofang.wang@math.uni-freiburg.de}
\thanks{GW is partly supported by SFB/TR71 ``Geometric partial differential equations'' of DFG. CX is supported by China Scholarship Council.}
\author{Chao Xia}\address{Albert-Ludwigs-Universit\"at Freiburg, Mathematisches Institut,  Eckerstr. 1, 79104 Freiburg, Germany}
\email{chao.xia@math.uni-freiburg.de}

\date{}

\begin{abstract}{ In this paper, we prove a sharp  lower bound  %$\frac{\pi^2}{d_F^2}$
 of the first (nonzero) eigenvalue of Finsler-Laplacian  with the  Neumann boundary condition. Equivalently, we prove an optimal anisotropic Poincar\'e inequality
for convex domains, which generalizes the result of Payne-Weinberger \cite{PW}. A  lower bound
 of the first (nonzero) eigenvalue of Finsler-Laplacian  with the  Dirichlet boundary condition is also proved.
%See also  \cite{Be}
%In particular, the lower bound of the first (nonzero) eigenvalue for the Neumann boundary  is sharp, which is a generalization of results of Li-Yau and Zhong-Yang
%for the ordinary Laplacian.

\

\noindent
{\rm Keywords.} Finsler-Laplacian, first eigenvalue, gradient estimate,
optimal Poincar\'e inequality.
}

\end{abstract}

\medskip

\maketitle

\section{Introduction and main results}

In this paper we are interested in studying the eigenvalues of the Finsler-Laplacian $Q$, which is a
natural generalization of the ordinary Laplacian $\Delta$.
We say that $F$ is a {\it norm} on $\mathbb{R}^n$, if $F: \mathbb{R}^n\rightarrow [0,+\infty)$ is a convex function of
class $C^1(\mathbb{R}^n\backslash\{0\})$, which is even and
positively $1$-homogeneous, i.e.
\begin{equation*}
\label{01}
F(t\xi)=|t|F(\xi)
\hbox{ for any }t\in\mathbb{R},\quad\xi\in \mathbb{R}^n,\end{equation*} and
\begin{equation*} \label{02}F(\xi)>0 \hbox{  for any  }\xi\neq 0.\end{equation*}
%$F$ is said to be a {\it strongly convex norm} on $\mathbb{R}^n$ if $F$ is a norm and \begin{equation} \hbox{Hess}(F^2) \hbox{ is positive definite in } \mathbb{R}^n\setminus\{0\}.
%\end{equation}
A typical norm on $\mathbb{R}^n$ is
$F(\xi)= (\sum_{i=1}^n | \xi_i|^q)^{1/q}$ for $q\in (1,\infty)$. %(In fact, they are not strongly convex, however, we can approximate it by a sequence of strongly convex norm).
%Moreover,  if $F$ is not necessary even, namely, (\ref{01}) is relaxed to be
%\begin{equation}
%\label{03}
%F(t\xi)=tF(\xi)\hbox{ for any } t>0,\end{equation}
%we call it  a {\it Minkowski norm }on $\mathbb{R}^n$.
The {\it Finsler-Laplacian} on $(\mathbb{R}^n,F)$ is defined by
\begin{eqnarray}\label{Finsler Laplacian}
Qu:=\sum_{i=1}^n \frac {\partial}{\partial x_i}\left( F(\nabla u)
 F_{\xi_i}(\nabla u)\right)=\sum_{i=1}^n \frac {\partial}{\partial x_i}\left( \frac {\partial}{\partial \xi_i}\left(\frac12 F^2\right)(\nabla u)\right),
\end{eqnarray}
 where $ F_{\xi_i}=\frac{\partial F}{\partial \xi_i}
.$ When $F(\xi)=|\xi| =(\sum_{i=1}^n | \xi_i|^2)^{1/2}$, the Finsler-Laplacian $Q=\Delta$, the usual Laplacian.

%A Finsler manifold $(M,F)$ is a smooth manifold $M$ equipped with a Minkowski norm  $F(x, )$ on each tangent space $T_xM $. We will briefly review the definition of Finsler manifolds in section 4.

%We first concern the simplest Finsler manifold, say local Minkowski space (see \cite{BCS}), and will return to general Finsler manifold later.

The Finsler-Laplacian has been studied by many mathematicians,
 both in the context of Finsler geometry (see e.g. \cite{AB, GS, Oh, OS, S}) and quasilinear PDE ( see e.g. \cite{AFTL, BFK, FK, WX1, WX2, WX3}). Especially, many problems related to  the first eigenvalue of Finsler-Laplacian have been already considered in \cite{BFK, GS, K, Oh, WX2}.
In this paper we investigate the estimates of the first eigenvalue of the Finsler-Laplacian.

Let $\Omega$ be a smooth bounded domain in $\mathbb{R}^n$ and $\nu$ be the outward normal of its boundary $\partial\Omega$. The first eigenvalue $\lambda_1$ of Finsler-Laplacian $Q$ is defined by
the smallest positive constant such that there exists a nonconstant function $u$ satisfying
\begin{eqnarray}\label{first eigenvalue}-Qu=\lambda_1 u \quad \hbox{ in } \Omega\end{eqnarray}
with the Dirichlet boundary condition
\begin{eqnarray}\label{D boundary}u=0\hbox{ on } \partial \Omega\end{eqnarray}
or the Neumann boundary condition
\begin{eqnarray}\label{N boundary}\left\langle F_{\xi} (\nabla u),\nu\right\rangle=0 \hbox{ on } \partial \Omega.\end{eqnarray}
We call  $\l_1$ the {\it first Dirichlet  eigenvalue } (the {\it first Neumann eigenvalue } resp.)
and denote it by $\lambda_1^D$ (by $\lambda_1^N$ resp.).
Here $\left\langle F_{\xi} (\nabla u),\nu\right\rangle=\sum_{i=1}^n F_{\xi_i}(\nabla u) \nu^i$ and $\nu=(\nu^1,\cdots,\nu^n)$.
(\ref{N boundary}) is a natural
 Neumann boundary condition for the Finsler Laplacian.
 % However, by integration by parts, to ensure that $\int u=0$, this condition is natural.)
 When $F(\xi)=|\xi|$, $\left\langle F_{\xi} (\nabla u),\nu\right\rangle=\frac {\partial u}{\partial \nu}.$

The first Dirichlet (Neumann, resp.) eigenvalue can be formulated as  a variational problem by
\begin{eqnarray}\lambda_1^D(\Omega)=\inf \left\{\frac{\int_\Omega F^2(\nabla u)dx}{\int_\Omega u^2 dx}\, \bigg|\,  0\neq u\in W_0^{1,2}(\Omega)\right\}.\end{eqnarray}

\begin{eqnarray}\lambda_1^N(\Omega)=\inf \left\{\frac{\int_\Omega F^2(\nabla u)dx}{\int_\Omega u^2 dx}\, \bigg| \, 0\neq u\in W^{1,2}(\Omega), \int_\Omega u dx=0\right\}.\end{eqnarray}
Therefore obtaining
a sharp estimate of first eigenvalue is equivalent to obtaining the best constant in Poincar\'e type inequalities.

We remark that equation (\ref{first eigenvalue}) should be understood in a weak sense, i.e.
\begin{eqnarray*}\int_\Omega \sum_{i=1}^n \frac {\partial}{\partial \xi_i}\left(\frac12 F^2\right)(\nabla u)\varphi_i dx=\int_\Omega \lambda_1 u\varphi dx\hbox{ for any $\varphi\in C^\infty_0(\Omega)$}.\end{eqnarray*}
%Using this definition of weak solutions, we can

\
Finding a lower bound for the first eigenvalue is always an interesting problem. In \cite{BFK, GS}, the authors proved the Faber-Krahn type inequality for the first Dirichlet eigenvalue of the Finsler-Laplacian. A Cheeger type estimate for the first eigenvalue of the Finsler-Laplacian involving isoperimetric constant was also obtained there. In this paper, we are interested in the Payne-Weinberger type sharp estimate \cite{PW} of the first eigenvalue
in terms of some geometric quantity, such as the diameter with respect to $F$.
%This kind of sharp estimate sounds more applicable.

Before stating our main result, we need to introduce some concepts and definitions.
We say that $\partial\Omega$ is  {\it weakly convex} if the second fundamental form of  $\partial\Omega$ is nonnegative definite. We say that $\partial\Omega$ is  {\it $F$-mean convex} if the  $F$-mean curvature $H_F$ is nonnegative. For the definition of $F$-mean curvature, see section \ref{Pre}.%$F$-mean curvature is defined by the the first variation of $F$-area of $\partial\Omega$, i.e.
%$$\delta \left(\int_{\partial\Omega}F(\nu)d \sigma\right)=-\int_{\partial\Omega} H_F\phi d\sigma,$$
%where $\phi\nu$ denotes the variational vector field. For details of $H_F$ we refer to \cite{WX1}.

There is another convex function $F^0$ related to $F$, which  is defined to be
the support function of $K:=\{x\in\mathbb{R}^n :
F(x)<1\}$, namely
\[
{F^0}(x):=\sup_{\xi\in K}\langle x,\xi\rangle.
\]
It is easy to verify that ${F^0}:
\mathbb{R}^n\mapsto[0,+\infty)$ is also  a convex, even, $1$-positively homogeneous
function of class $C^1(\mathbb{R}^n\backslash\{0\})$. Actually
${F^0}$ is dual to $F$ (see for instance \cite{AFTL}) in the
sense that
$${F^0}(x)= \sup_{\xi\neq 0} \frac{\langle x,\xi\rangle}{F(\xi)}\quad \hbox{and}\quad F(x)= \sup_{\xi\neq 0}
\frac{\langle x,\xi\rangle}{{F^0}(\xi)}.$$
Hence the Cauchy-Schwarz inequality holds in the sense that
\begin{eqnarray}\label{CS}\langle\xi,\eta\rangle_{\mathbb{R}^n}\leq F(\xi)F^0(\eta).\end{eqnarray}
We call $\mathcal{W}_r(x_0):=\{x\in {\mathbb R}^n\,|\,F^0(x-x_0)\leq r\}$ %(\{x\in {\mathbb R}^n\,|\,F^0(x_0-x)\leq r\})$ 
a {\it  Wulff ball} of radius $r$
with center at $x_0$ .
We say $\gamma:[0,1]\to \Omega$ a {\it  minimal geodesic} from $x_1$ to $x_2$ if  $$d_F(x_1,x_2):=\int_0^1 F^0(\dot{\gamma}(t))dt=\inf \int_0^1 F^0(\dot{\tilde{\gamma}}(t))dt,$$
where the infimum takes on all $C^1$ curves $\tilde{\gamma}(t)$ in $\Omega$ from $x_1$ to $x_2$. In fact $\gamma$ is a straight line and $d_F(x_1,x_2)=F^0(x_2-x_1).$ We call $d_F(x_1,x_2)$ the {\it $F$-distance} between $x_1$ and $x_2$. %If $F$ is even, then $d_F(x_1,x_2)=d_F(x_2,x_1)$.

Now we can  define the {\it diameter} $d_F$ of $\Omega$ with respect to the norm $F$ on $\mathbb{R}^n$ as
\begin{equation*}
d_F:=\sup_{x_1,x_2\in\overline{\Omega}} d_F(x_1,x_2).\end{equation*}
%where the supremum takes on all minimal geodesics $\gamma(t)$ in $\Omega$.
In the same spirit we define the {\it inscribed radius} $i_F$ of $\Omega$ with respect to the norm $F$ on $\mathbb{R}^n$ as the radius of the biggest Wulff ball that can be enclosed in $\overline{\Omega}$.

\

Our main result of this paper is
\begin{theorem}\label{main thm2}Let $\Omega$ be a smooth bounded domain in $\mathbb{R}^n$ and $F\in C^1(\mathbb{R}^n\setminus\{0\})$  be a norm on $\mathbb{R}^n$. %that described in the beginning of this section.
Let $\lambda_1^N$ be the first Neumann eigenvalue of the Finsler-Laplacian (\ref{Finsler Laplacian}). %with
%the Dirichlet boundary condition (\ref{D boundary}).
% and
%the Neumann boundary condition (\ref{N boundary}). % respectively.
%\begin{equation}\label{first eigenvalue}\left\{
%\begin{array}{rccl}
%-Qu&=&\lambda_1 u &\hbox{ in } \Omega\\
% u&=&0 &\hbox{ on } \partial \Omega\end{array}
%\right.
%\end{equation}
Assume  that $\partial\Omega$ is weakly convex. Then $\lambda_1^N$ satisfies
\begin{equation}\label{N}
\lambda_1^N\geq\frac{\pi^2}{d_F^2}.\end{equation}
%Here $d_F$ denotes the diameter of $\Omega$ with respect to Finsler metric $F$ on $\mathbb{R}^n$, i.e.
%\begin{equation*}
%d_F:=\inf\left\{\int F^0(\dot{\gamma}(t))dt\right\},\end{equation*}
%where the infimum takes on all curves $\gamma(t)$ in $\Omega$.
%\end{itemize}
Moreover, equality in (\ref{N}) holds if and only if $\Omega$ is a segment in $\mathbb{R}$.
\end{theorem}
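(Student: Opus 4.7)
My plan is to follow the sharp gradient-estimate strategy for Neumann eigenvalues (in the spirit of Kröger, Zhong--Yang, Bakry--Qian) adapted to the Finsler Laplacian. Fix a first Neumann eigenfunction $u$ with eigenvalue $\lambda := \lambda_1^N$, translate and rescale so that $\max_{\overline{\Omega}} u = 1$ and $\min_{\overline{\Omega}} u = -k$ for some $k\in (0,1]$. Standard quasilinear regularity theory gives $u \in C^{1,\alpha}(\overline{\Omega})$ with smoothness away from the critical set $\{\nabla u = 0\}$, so all pointwise calculations will take place there and be extended to the critical set by routine approximation.

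The heart of the argument is an anisotropic gradient estimate of the form
\[
F(\nabla u)^2 \leq \lambda\,\Phi_k(u)\quad\text{on }\overline{\Omega},
\]
where $\Phi_k$ is an appropriate 1D model profile (the Zhong--Yang refinement of $1-u^2$ tuned to the asymmetry parameter $k$, equal to $1-u^2$ in the symmetric case $k=1$). I would set $P := F(\nabla u)^2 - \lambda\,\Phi_k(u)$ and apply the maximum principle. At an interior maximum of $P$, a Bochner-type identity for $Q$, built on the positive-definite Hessian matrix $F\,F_{\xi\xi} + F_\xi\otimes F_\xi$ that plays the role of an anisotropic metric, together with $-Qu = \lambda u$ and the defining ODE satisfied by $\Phi_k$, should give $LP \geq 0$ for a suitable linearized uniformly elliptic operator $L$, forcing $P\leq 0$ unless the maximum occurs on $\partial\Omega$. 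To rule out a boundary maximum I would differentiate $F(\nabla u)^2$ along $\nu$ and eliminate the leading term using $\langle F_\xi(\nabla u),\nu\rangle = 0$; the residue should reduce to a nonpositive multiple of $\mathrm{II}(F_\xi(\nabla u),F_\xi(\nabla u))$, where $\mathrm{II}$ is the Euclidean second fundamental form of $\partial\Omega$, so that weak convexity gives $\partial_\nu P\leq 0$. I expect this boundary computation to be the main obstacle, since it requires the correct algebraic identity linking the Finsler--Neumann condition to the classical second fundamental form.

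With the gradient estimate in hand, pick $x_+, x_-\in\overline{\Omega}$ with $u(x_+)=1$, $u(x_-)=-k$ and set $\gamma(s)=x_-+s(x_+-x_-)$ for $s\in[0,1]$. Since minimal $F^0$-geodesics are straight lines, $\gamma$ has $F^0$-length $L=F^0(x_+-x_-)\leq d_F$. Reparametrizing $\gamma$ by $F^0$-arclength and using the anisotropic Cauchy--Schwarz inequality~(\ref{CS}) gives $du/ds \leq F(\nabla u)\cdot F^0(\dot\gamma) = F(\nabla u)$. Combining this with the gradient estimate and changing variable from $s$ to $u$ (valid on a generic segment, and extended by approximation) yields
\[
d_F \geq L \geq \int_{-k}^{1}\frac{du}{\sqrt{\lambda\,\Phi_k(u)}} \geq \frac{\pi}{\sqrt{\lambda}},
\]
where the last bound is the defining property of the Zhong--Yang profile and is computed directly as $\int_{-1}^{1}du/\sqrt{1-u^2}=\pi$ in the symmetric case $k=1$. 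This is $\lambda\geq \pi^2/d_F^2$. For the rigidity statement, each inequality above must be saturated: equality in Cauchy--Schwarz forces $\nabla u$ along $\gamma$ to be the $F^0$-dual of $x_+-x_-$, saturation of the gradient estimate forces $u$ to coincide with the 1D model along $\gamma$, and $F^0(x_+-x_-)=d_F$ together with weak convexity and the Finsler--Neumann condition forces all level sets of $u$ to be parallel affine hyperplanes, collapsing $\Omega$ onto a segment.
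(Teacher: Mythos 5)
Your overall architecture (gradient estimate, integrate along a straight $F^0$-geodesic, use the anisotropic Cauchy--Schwarz inequality (\ref{CS})) matches the paper, but there are two concrete gaps at exactly the points you flag as delicate. First, the boundary term: you propose to differentiate $F(\nabla u)^2$ along $\nu$ and kill the leading term using $\langle F_\xi(\nabla u),\nu\rangle=0$. In the anisotropic setting this condition does \emph{not} imply $\partial u/\partial\nu=0$, so the term $\lambda\Phi_k'(u)\,\partial_\nu u$ in $\partial_\nu P$ does not drop out and has no sign. The correct move (used in the paper) is to differentiate along the conormal $V^i=a_{ij}(\nabla u)\nu^j$, which still points outward by positivity of $(a_{ij})$ and for which $\partial u/\partial V=FF_j\nu^j=0$ holds by the Neumann condition; one then differentiates $F_i\nu^i=0$ tangentially to reduce the residue to $-\psi F\sum_{\alpha,\gamma}F_\alpha F_\gamma h_{\alpha\gamma}\le 0$. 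With $\nu$ itself the computation does not close.

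Second, and more seriously, the asymmetric case $k<1$ is where your argument is only a wish. With the symmetric profile, $\int_{-k}^{1}du/\sqrt{1-u^2}<\pi$ when $k<1$, so the whole burden falls on the unspecified profile $\Phi_k$ and on the claim that $\int_{-k}^{1}du/\sqrt{\Phi_k(u)}\ge\pi$ "by the defining property of the Zhong--Yang profile" --- which is precisely the hard part of Zhong--Yang, and the authors state explicitly that this method seems hard to transplant to the Finsler Laplacian. The paper instead compares $F(\nabla u)$ with $v'(v^{-1}(u))$ for solutions $v$ of the one-dimensional models $v''-Tv'=-\lambda_1 v$ with drift $T=-(n-1)/t$ (Theorem \ref{Comparison thm}), which requires the refined inequality (\ref{eq1}) (not just Kato) and the two-function ansatz $P=\psi(u)(\tfrac12F(\nabla u)^2-\phi(u))$ of Bakry--Qian; crucially it then needs the maximum-comparison Lemma \ref{thm2} ($\max u\ge m(a)$) together with Lemma \ref{thm1} to select a model whose range matches $[\min u,\max u]$, so that the change of variables produces the full length $\delta(a)\ge\pi/\sqrt{\lambda_1}$. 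Your proposal contains no substitute for this maximum-comparison step, and without it the chain $d_F\ge\int_{-k}^1 du/F(\nabla u)\ge\pi/\sqrt{\lambda}$ is unjustified for $k<1$. The rigidity sketch is plausible in outline but also skips the needed care at the degenerate set $\{\nabla u=0\}$ (the paper runs a strong-maximum-principle argument for $\mathcal{P}=F(\nabla u)^2+\lambda_1u^2$ on the complement of that set and only then concludes $X=\nabla u/F(\nabla u)$ is a constant vector).
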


Estimate (\ref{N}) for the Neumann boundary problem is optimal. This is in fact a
generalization of the classical result of Payne-Weinberger in \cite{PW} on an optimal estimate of the first Neumann eigenvalue
of the ordinary Laplacian.%in the case of bounded domains in ${\mathbb R}^n$, which
%which was proved by Payne-Weinberger in \cite{PW}.
See also \cite{Be}. There are many interesting generalizations. Here we just mention its generalization to Riemannian manifolds, since we will use the methods developed there.
It should be also interesting to ask if the methods of \cite{PW} and \cite{Be}
 work to reprove our result, since there are lots of motivations in computational mathematics.
 %After that, Li-Yau \cite{LY} and  Zhong-Yang \cite{ZY} were able to generalize it to Riemannian manifolds with nonnegative Ricci curvature.

For a smooth compact $n$-dimensional Riemannian manifold $(M,g)$ with nonnegative Ricci curvature and diameter $d$, possibly with boundary, the first Neumann eigenvalue $\lambda_1$ of Laplace operator $\Delta$ is defined to be the smallest positive constant such that there is a nonconstant function $u$ satisfying
\[-\Delta u=\lambda_1 u \hbox{ in } M,\]
with
 \[\frac{\partial u}{\partial \nu} =0 \hbox{ on } \partial M,\]
 if $\partial M$ is not empty,
where $\nu$ denotes the outward normal of $\partial M$.
A  fundamental work of  Li \cite{Li}, Li-Yau \cite{LY}, Zhong-Yang \cite{ZY}
gives us the following optimal estimate
\begin{equation}
\label{best}\lambda_1\geq \frac{\pi^2}{d^2},
\end{equation}
where $d$ is the diameter of $M$ with respect to $g$.
%This is the  \cite{ZY}.
Li-Yau \cite{LY} derived a gradient estimate for the eigenfunction $u$ and proved that $\lambda_1\geq \frac{\pi^2}{4d^2} $ and
Li \cite{Li} used another auxiliary function to obtain a better estimate $\lambda_1\geq \frac{\pi^2}{2d^2}.$ Finally, Zhong-Yang \cite{ZY} was able to use a more precise auxiliary function to get the sharp estimate $\lambda_1\geq \frac{\pi^2}{d^2}$, which is optimal in the sense that the lower bound is achieved  by a  circle or a segment.  Recently Hang-Wang  \cite{HW} proved that equality in (\ref{best}) holds if and only if $M$ is a circle or a segment. For the related work see also \cite{Kr}, \cite{CW} and \cite{BQ}. Very recently these results
were generalized to the $p$-Laplacian in \cite{Va} and to the Laplacian
on Alexandrov spaces in \cite{QZZ}.

\

For the  Dirichlet problem we have
\begin{theorem}\label{main thm} Let $\Omega$ be a smooth bounded domain in $\mathbb{R}^n$ and $F\in C^1(\mathbb{R}^n\setminus\{0\})$  be a norm on $\mathbb{R}^n$. %that described in the beginning of this section.
Assume that  $\lambda_1^D$ are the first Dirichlet eigenvalue of the Finsler-Laplacian (\ref{Finsler Laplacian}). %with
%the Dirichlet boundary condition (\ref{D boundary}).
%and the Neumann boundary condition (\ref{N boundary}).% respectively.
%\begin{equation}\label{first eigenvalue}\left\{
%\begin{array}{rccl}
%-Qu&=&\lambda_1 u &\hbox{ in } \Omega\\
% u&=&0 &\hbox{ on } \partial \Omega\end{array}
%\right.
%\end{equation}
Assume further that $\partial\Omega$ is F-mean convex. Then $\lambda_1^N$ satisfies
\begin{equation}\label{D}
\lambda_1^D\geq\frac{\pi^2}{4i_F^2}.\end{equation}
%Here $d_F$ denotes the diameter of $\Omega$ with respect to Finsler metric $F$ on $\mathbb{R}^n$, i.e.
%\begin{equation*}
%d_F:=\inf\left\{\int F^0(\dot{\gamma}(t))dt\right\},\end{equation*}
%where the infimum takes on all curves $\gamma(t)$ in $\Omega$.
%\end{itemize}
\end{theorem}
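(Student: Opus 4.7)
My plan is to adapt to the Finsler setting the classical gradient-estimate method used for the first Dirichlet eigenfunction of the Laplacian on a mean-convex domain (\`{a} la Li-Yau). Let $u$ be a first Dirichlet eigenfunction, which may be taken positive in $\Omega$ by the strong maximum principle, and normalize it so that $M := \max_{\overline{\Omega}} u = 1$ is attained at some $x_0 \in \Omega$. The key analytic input is the pointwise gradient estimate
\[
F^2(\nabla u) \leq \lambda_1^D (1-u^2)\quad\text{in } \overline{\Omega}, \qquad (\star)
\]
from which (\ref{D}) will follow by integrating along the straight segment from $x_0$ to its closest boundary point in the $F^0$-metric.

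To prove $(\star)$, I would apply the maximum principle to the auxiliary function $P := F^2(\nabla u) + \lambda_1^D u^2$ and show $\max_{\overline{\Omega}} P \leq \lambda_1^D$. The linearization of $Q$ at $u$ is the (degenerate) elliptic operator
\[
\mathcal{L} := \tfrac{1}{2}(F^2)_{\xi_i\xi_j}(\nabla u)\,\partial_i\partial_j + (\text{lower-order terms}),
\]
and an anisotropic Bochner-type identity -- the same kind of identity underlying the proof of the Neumann Theorem \ref{main thm2} and standard in the Finsler-Laplacian literature -- yields $\mathcal{L} P \geq 0$ modulo terms that vanish wherever $\nabla P$ does. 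Any interior maximum of $P$ must therefore be a critical point of $u$, at which $P = \lambda_1^D u^2 \leq \lambda_1^D$. If the maximum of $P$ were instead attained at some $\tilde{x} \in \partial\Omega$, then $u(\tilde{x}) = 0$ and $\nabla u(\tilde{x})$ is anti-parallel to $\nu$, so a direct computation expresses $\partial_\nu P(\tilde{x})$ as a quantity whose sign is opposite to that of the $F$-mean curvature $H_F(\tilde{x})$. The $F$-mean convexity hypothesis then gives $\partial_\nu P(\tilde{x}) \leq 0$, contradicting Hopf's boundary-point lemma applied to the $\mathcal{L}$-subsolution $P$. This rules out a boundary maximum and proves $(\star)$.

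With $(\star)$ in hand, choose $y \in \partial\Omega$ minimizing $F^0(\,\cdot\, - x_0)$, so that $F^0(y - x_0) = \mathrm{dist}_{F^0}(x_0, \partial\Omega) \leq i_F$. The segment $\gamma(t) := (1-t)x_0 + ty$, $t\in[0,1]$, lies in the Wulff ball $\mathcal{W}_{F^0(y-x_0)}(x_0) \subset \overline{\Omega}$ and meets $\partial\Omega$ only at $y$. Setting $h(t) := \arcsin(u(\gamma(t)))$ one has $h(0) = \pi/2$ and $h(1) = 0$, and combining $(\star)$ with the Cauchy-Schwarz inequality (\ref{CS}) gives
\[
\frac{\pi}{2} \;=\; \int_0^1 \frac{-\langle \nabla u(\gamma(t)),\, y-x_0\rangle}{\sqrt{1-u(\gamma(t))^2}}\,dt \;\leq\; \int_0^1 \frac{F(\nabla u)\, F^0(y-x_0)}{\sqrt{1-u^2}}\,dt \;\leq\; \sqrt{\lambda_1^D}\, i_F,
\]
which on squaring yields (\ref{D}).

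The main obstacle is the boundary half of $(\star)$. The interior Bochner-type computation is by now routine for the Finsler-Laplacian, but cleanly rewriting $\partial_\nu P$ at a Dirichlet boundary point as a quantity controlled in sign by $H_F$ -- so that the $F$-mean convexity hypothesis gives exactly the right sign for the Hopf comparison -- requires some careful bookkeeping with the anisotropic duality between $F$ and $F^0$ and with the anisotropic second fundamental form of $\partial\Omega$.
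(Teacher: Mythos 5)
Your plan is genuinely different from the paper's. The paper works with the \emph{parametrized} auxiliary function
\[
P_{\alpha,\beta}(x)=\frac{F^2(\nabla u)}{2\bigl(\beta^2-(\alpha+u)^2\bigr)},\qquad \alpha>0,\ \beta=\alpha+1,
\]
derives a gradient estimate depending on $\alpha$, and lets $\alpha\to 0$ at the end. Crucially, the strict positivity of $\alpha$ makes the boundary computation \emph{strictly} contradictory: at a boundary maximum one only uses the first-order condition $\partial P/\partial\nu_F\ge 0$ together with Lemma~\ref{F mean curvature} to get $-F H_F-\alpha F^2/(\beta^2-\alpha^2)\ge 0$, which is impossible once $\alpha>0$ and $H_F\ge 0$. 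No Hopf lemma is needed. You instead use the unparametrized function $P=F^2(\nabla u)+\lambda_1 u^2$ and rely on \emph{strict} Hopf at the boundary; the trade-off is that you avoid the $\alpha\to 0$ limit but must then justify strictness (uniform ellipticity near $\partial\Omega$, boundedness of the first-order coefficients, and that $P$ is not already maximal on an interior ball touching the boundary), because with $\alpha=0$ the weak first-order condition gives only $-F^2 H_F\ge 0$, which is compatible with $H_F=0$. This is a valid alternative route, but it is more delicate than the paper's and you have essentially deferred the hard part.

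There is also a genuine gap in your interior argument. You write that the Bochner inequality ``yields $\mathcal{L}P\ge0$ modulo terms that vanish wherever $\nabla P$ does'' and conclude that ``any interior maximum of $P$ must therefore be a critical point of $u$.'' That deduction does not follow from the second-order test at a single point: at an interior maximum $x_0$ with $\nabla u(x_0)\neq 0$, you get $\nabla P(x_0)=0$ and $a_{ij}P_{ij}(x_0)\le 0$, while the Bochner/Kato chain gives $a_{ij}P_{ij}(x_0)\ge 0$; the only conclusion is $a_{ij}P_{ij}(x_0)=0$, which is perfectly consistent with an interior maximum. (Indeed, $P\equiv F^2+\lambda u^2$ is constant for the explicit one-dimensional eigenfunction, so no pointwise contradiction is possible.) The correct statement needs the \emph{strong} maximum principle: $\tfrac12 a_{ij}P_{ij}+b_lP_l\ge 0$ on $\Omega\setminus\mathcal{C}$ implies $P$ cannot attain its maximum at an interior point of $\Omega\setminus\mathcal{C}$ unless $P$ is constant on the whole connected component; one then checks that the closure of any such component meets $\mathcal{C}$ (because $u>0$ in $\Omega$ and $u=0$ on $\partial\Omega$), so the maximum value is attained on $\mathcal{C}\cup\partial\Omega$, and on $\mathcal{C}$ one has $P=\lambda_1 u^2\le\lambda_1$. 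Once this and the Hopf details are supplied, your integration step (which is essentially the $\alpha\to 0$ version of the paper's final computation and is correct) yields (\ref{D}).
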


Estimate (\ref{D}) is by no mean optimal.

Our idea to prove the result on the Dirichlet eigenvalue is based on the gradient estimate technique for eigenfunctions of Li-Yau \cite{Li, LY}. This idea also works for the first Neumann eigenvalue to get a rough estimate, say $\lambda_1^N\geq\frac{\pi^2}{2d_F^2}$. However, for getting the sharp estimate of the first Neumann eigenvalue (\ref{N}), the method of Zhong-Yang seems hard to apply. Instead, we adopt the technique based on gradient comparison with a one dimensional model function, which was developed by Kr\"oger \cite{Kr} and improved by Chen-Wang \cite{CW} and Bakry-Qian \cite{BQ}.
%In order to apply these methods, we need
%the
Surprisingly, we find that the one dimensional model coincides with that for the Laplacian case. In fact, this must be the case because when we consider $F$ in $\mathbb{R}$, it can only be $F(x)=c|x|$ with $c>0$, a multiple of the standard Euclidean norm. In order to  get the gradient comparison theorem,  we need a
Bochner type formula (\ref{Bochner}), A Kato type inequality (\ref{Kato}) and
a refined inequality (\ref{eq1}), which was referred to as the ``extended Curvature-Dimension inequality" in the context of Bakry-Qian \cite{BQ}. Interestingly,  the proof of %(\ref{eq1})
these inequalities sounds  more ``naturally'' than the proof of their counterpart for
the usual Laplace operator.
 These inequalities may have their own interest.  Another difficulty we encounter
 is to handle the boundary maximum due to  the different representation of the Neumann boundary condition (\ref{N boundary}). We find a suitable vector field $V$ (see its explicit construction in Section \ref{NFG}) to avoid this difficulty. With the gradient comparison theorem, we are able to follow step by step the work of Bakry-Qian \cite{BQ} to get the sharp estimate. The proof for the rigidity part of Theorem \ref{main thm2} follows closely the work of Hang-Wang \cite{HW}. Here we need pay more attention on the points with vanishing $|\nabla u|$.

A natural question arises whether one can generalize Theorem \ref{main thm2} to manifolds?
The Finsler-Laplacian with the norm $F$ has not a direct generalization to Riemannian manifolds. However, it has a (natural) generalization to Finsler manifolds.
%In this case, the manifold is not a Riemannian manifold, but a Finsler manifold.
In fact, ${\mathbb R}^n$ with $F$ can be viewed as  a special Finsler manifold.
On a general Finsler manifold, there is a generalized Finsler-Laplacian, see for instance
\cite{GS, Oh, S}.
A Lichnerowicz type result for the first eigenvalue of this Laplacian  was obtained
in \cite{Oh} under a condition on
 some kind of  new Ricci curvature $Ric_N,N\in[n,\infty]$. %See \cite{Oh, OS}.
 A Li-Yau-Zhong-Yang type sharp estimate, i.e., a generalization of Theorem \ref{main thm2} for this generalized Laplacian on Finsler manifolds would be a challenging problem.
 %With the help of a Bochner formula
 % proved recently in \cite{OS},
 We  will study this problem
in a forthcoming paper.

\

The paper is organized as follows. In Section \ref{Pre}, we give some preliminary results on $1$-homogeneous convex functions and
the $F$-mean curvature
  and prove useful inequalities.  In Section 3 we prove the sharp estimate for the first Neumann  eigenvalue and classify the equality case.
We handle the first Dirichlet eigenvalue in Section \ref{DFG}.% in the last section

\

\section{Preliminary}\label{Pre}

Without of loss generality, we may assume that $F\in C^3(\mathbb{R}^n\setminus\{0\})$ and 
$F$ is a {\it strongly convex norm} on $\mathbb{R}^n$, i.e. $F$ satisfies 
\begin{equation*} \hbox{Hess}(F^2) \hbox{ is positive definite in } \mathbb{R}^n\setminus\{0\}.
\end{equation*}
%in order to
%have more regularity of $u$. % without loss of generality.
In fact, 
for any norm $F\in C^1(\mathbb{R}^n\setminus\{0\})$,  there exists a sequence $F_\varepsilon\in C^3(\mathbb{R}^n\setminus\{0\})$ such that the strongly convex  norm $\widetilde{F}_\varepsilon:=\sqrt{F_\varepsilon^2+\varepsilon|x|^2}$ converges  to $F$ uniformly in $C_{loc}^1(\mathbb{R}^n\setminus\{0\})$, then the corresponding first eigenvalue  $(\lambda_1)_\varepsilon$ of Finsler-Laplacian with respect to $\widetilde{F}_\varepsilon$, converges to $\lambda_1$ as well. Here $|\cdot|$ denotes the Euclidean norm.

Therefore, in the following sections, we assume that $F\in C^3(\mathbb{R}^n\setminus\{0\})$ and
$F$ is a  strongly convex norm on $\mathbb{R}^n$. Thus (\ref{first eigenvalue}) is degenerate elliptic among $\Omega$ and uniformly elliptic in $\Omega\setminus \mathcal{C},$
 where $\mathcal{C}:=\{x\in\Omega|\nabla u(x)=0\}$ denotes the set of degenerate points.
The standard regularity theory for degenerate elliptic equation (see e.g. \cite{BFK, To}) implies that
$u\in C^{1,\alpha}(\Omega)\bigcap  C^{2,\alpha}(\Omega\setminus \mathcal{C})$.
 
The following property is an obvious consequence of 1-homogeneity of $F$.
\begin{proposition}\label{property}
Let $F: \mathbb{R}^n\rightarrow[0,+\infty)$ be a 1-homogeneous
function, then the following holds:
\begin{itemize}
\item[(i)] $\sum_{i=1}^n F_{\xi_i}(\xi)\xi_i=F(\xi)$;
\item[(ii)] $\sum_{j=1}^n F_{\xi_i\xi_j}(\xi)\xi_j=0$, for any
$i=1,2,\ldots,n$;
\end{itemize}
\end{proposition}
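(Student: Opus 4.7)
The plan is to deduce both identities directly from Euler's theorem on homogeneous functions, which is the standard consequence of $1$-homogeneity. Since $F$ is $C^1$ away from the origin and $1$-positively homogeneous (as required by the definition of a norm given in the introduction), the identity $F(t\xi)=tF(\xi)$ holds for $t>0$, and this is all that is needed.

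For part (i), I would differentiate $F(t\xi)=tF(\xi)$ with respect to $t$ by the chain rule, obtaining $\sum_{i=1}^n F_{\xi_i}(t\xi)\xi_i = F(\xi)$, and then set $t=1$. For part (ii), I would first differentiate $F(t\xi)=tF(\xi)$ with respect to $\xi_i$ (using again the chain rule on the left) to get $t F_{\xi_i}(t\xi)=tF_{\xi_i}(\xi)$, which after dividing by $t>0$ shows that each partial derivative $F_{\xi_i}$ is positively $0$-homogeneous. Then I would apply the same differentiation-in-$t$ trick to the identity $F_{\xi_i}(t\xi)=F_{\xi_i}(\xi)$ at $t=1$, yielding $\sum_{j=1}^n F_{\xi_i\xi_j}(\xi)\xi_j=0$.

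There is no serious obstacle here: the argument is a two-line application of the chain rule, valid for every $\xi\in\mathbb{R}^n\setminus\{0\}$ where the derivatives exist. The only point worth a brief remark is that the conclusion is stated for $\xi\neq 0$ (so that $F_{\xi_i}$ and $F_{\xi_i\xi_j}$ make sense), which matches the regularity assumption $F\in C^1(\mathbb{R}^n\setminus\{0\})$ (or $C^3$ after the smoothing discussed at the start of the Preliminary section).
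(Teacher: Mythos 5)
Your proof is correct and is exactly the standard Euler-theorem argument for homogeneous functions; the paper itself omits a proof (the statement is marked as an obvious consequence of $1$-homogeneity), and what you have written is the expected two-line justification. Both identities follow cleanly from differentiating $F(t\xi)=tF(\xi)$ in $t$ and observing that $F_{\xi_i}$ is $0$-homogeneous, so there is nothing to add or correct.
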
\qed

For simplicity, from now on  we will follow the summation convention and frequently use the notations  $F=F(\nabla u)$, $F_{i}=F_{\xi_i}(\nabla u)$, $u_i=\frac{\partial u}{\partial x_i}$,
$u_{ij}=\frac{\partial^2 u}{\partial x_i\partial x_j}$ and so on. Denote \begin{equation}
\begin{array}{rcl}\label{add_1}
%a_{i}(\nabla u)(x) &=& \ds\vs \frac{\partial}{\partial \xi_i }\left(\frac12 F^2\right)(\nabla u)(x),\\
a_{ij}(\nabla u)(x) &:=&\ds\vs \frac{\partial^2 }{\partial \xi_i \partial \xi_j}\left(\frac12 F^2\right)(\nabla u(x)) =(F_iF_j+FF_{ij})(\nabla u(x)),\\
a_{ijk}(\nabla u)(x)&:=&\ds \frac{\partial^3 }{\partial \xi_i \partial \xi_j \partial \xi_k}\left(\frac12 F^2\right)(\nabla u (x)).
\end{array}
\end{equation}
 In the following we shall write it simply by $a_{ij}$ and $a_{ijk}$ if no confusion appears.
With these notations, we can rewrite the Finsler-Laplacian (\ref{Finsler Laplacian}) as
\begin{eqnarray}\label{Qu}
Qu=a_{ij}u_{ij}.
\end{eqnarray}

%As in the theory of harmonic maps, we define a density of a function $u$ with respect to $F$ by
%\begin{eqnarray}\label{density fct}
%e(u)=e_F(u)=\frac 12 F^2(\nabla u).
%\end{eqnarray}
For the function $\frac12 F^2(\nabla u)$ we have a Bochner type formula.
\begin{lemma}[Bochner Formula] At a point where $\nabla u\neq 0$, we have
%\begin{eqnarray}\label{Bochner}
%a_{ij}(e(u))_{ij}= a_{ij}a_{kl}u_{ik}u_{jl}+ a_k(Qu)_k%\frac{\partial }{\partial \xi_k}\left(\frac12 F^2\right)(\nabla u)
%-a_{ijl}\left(e(u)\right)_l u_{ij}.
%\end{eqnarray}
\begin{equation}\label{Bochner}
 a_{ij}\left(\frac12 F^2(\nabla u)\right)_{ij}= a_{ij}a_{kl}u_{ik}u_{jl}+(Qu)_k\frac{\partial }{\partial \xi_k}\left(\frac12 F^2\right)(\nabla u)-a_{ijl}\frac{\partial }{\partial x_l}\left(\frac12 F^2(\nabla u)\right)u_{ij}.
\end{equation}
\end{lemma}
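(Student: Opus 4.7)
The identity is essentially a mechanical two-derivative chain-rule computation, and the key restriction to the set $\{\nabla u\neq 0\}$ is exactly what we need so that the regularity theory quoted in the preliminary section gives us a $C^3$ function $\tfrac12 F^2$ and a $C^2$ function $u$, so that all third derivatives of $u$ appearing below are legitimate and symmetric.

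The plan is to introduce the shorthand $\phi(x):=\tfrac12 F^2(\nabla u(x))$ and $p_k:=\tfrac{\partial}{\partial\xi_k}(\tfrac12 F^2)(\nabla u)=FF_k(\nabla u)$, then differentiate twice. First, by the chain rule $\phi_j=p_k u_{kj}$. Differentiating again, and using that $\partial_i p_k=\tfrac{\partial(FF_k)}{\partial\xi_m}(\nabla u)\,u_{mi}=a_{km}u_{mi}$ by the very definition of $a_{ij}$ in (\ref{add_1}), I get
\begin{equation*}
\phi_{ij}=a_{km}u_{mi}u_{kj}+p_k u_{ijk},
\end{equation*}
where I have already used $u_{kji}=u_{ijk}$.

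Multiplying by $a_{ij}$ and relabeling dummy indices in the first term (using the symmetry of $a_{ij}$ and of second derivatives of $u$) immediately gives $a_{ij}a_{km}u_{mi}u_{kj}=a_{ij}a_{kl}u_{ik}u_{jl}$, which is the Hessian term in (\ref{Bochner}). For the remaining piece $a_{ij}p_k u_{ijk}$ the key step is to extract $(Qu)_k$ by differentiating $Qu=a_{ij}(\nabla u)u_{ij}$ in $x_k$:
\begin{equation*}
(Qu)_k=a_{ijl}u_{lk}u_{ij}+a_{ij}u_{ijk},
\end{equation*}
so that $a_{ij}u_{ijk}=(Qu)_k-a_{ijl}u_{lk}u_{ij}$. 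Multiplying by $p_k$ and observing that $p_k u_{lk}=\phi_l$ converts the second piece into $a_{ijl}\phi_l u_{ij}$, which is exactly the last term in (\ref{Bochner}), while the first piece $p_k(Qu)_k=\tfrac{\partial}{\partial\xi_k}(\tfrac12 F^2)(\nabla u)\,(Qu)_k$ is the middle term.

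There is no real obstacle beyond careful index bookkeeping; the only point deserving mild care is making sure the coefficient $a_{ij}$ in $Qu=a_{ij}u_{ij}$ is treated as $a_{ij}(\nabla u)$ and therefore contributes an extra $a_{ijl}u_{lk}$ term under $\partial_k$ (this is the anisotropic analog of the term that is absent in the classical Bochner formula, where $a_{ij}=\delta_{ij}$ is constant). Collecting the three contributions yields (\ref{Bochner}) verbatim.
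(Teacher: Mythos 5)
Your proof is correct and follows essentially the same route as the paper: differentiate $\phi_j = p_k u_{kj}$ once more, contract with $a_{ij}$, and convert $a_{ij}p_k u_{ijk}$ into $(Qu)_k p_k$ minus the $a_{ijl}\phi_l u_{ij}$ correction coming from the $\nabla u$-dependence of the coefficients. The index bookkeeping and the identity $p_k\,\partial_k a_{ij}=a_{ijl}\phi_l$ match the paper's computation exactly.
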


\begin{proof}
The formula is derived from a direct computation.
\begin{eqnarray*}
 a_{ij}(\nabla u)\left(\frac12 F^2(\nabla u)\right)_{ij}&= & a_{ij}\frac{\partial }{\partial x_j}\left(\frac{\partial }{\partial \xi_k}\left(\frac12 F^2\right)(\nabla u)u_{ik}\right)\\
&=& a_{ij}\frac{\partial^2 }{\partial \xi_k \partial \xi_l}\left(\frac12 F^2\right)(\nabla u)u_{ik}u_{jl}+a_{ij}\frac{\partial }{\partial \xi_k}\left(\frac12 F^2\right)(\nabla u)u_{ijk}\\
&=& a_{ij}a_{kl}u_{ik}u_{jl}+\frac{\partial }{\partial \xi_k}\left(\frac12 F^2\right)(\nabla u)\left(\frac{\partial }{\partial x_k}(a_{ij}u_{ij})-(\frac{\partial }{\partial x_k}a_{ij})u_{ij}\right).
\end{eqnarray*}
Taking into account of (\ref{Qu}) and
\begin{eqnarray*}
\frac{\partial }{\partial \xi_k}\left(\frac12 F^2\right)\frac{\partial }{\partial x_k}a_{ij}=a_{ijl}\frac{\partial }{\partial x_l}\left(\frac12 F^2(\nabla u)\right),
\end{eqnarray*}
we get (\ref{Bochner}).

\end{proof}
When $F(\xi)=|\xi|,$  (\ref{Bochner}) is just the usual Bochner formula
\[\frac 12 \Delta(|\nabla u|^2)=|D^2 u|^2+ \langle\nabla u , \nabla (\Delta u)\rangle.\]

We have
a Kato type inequality for the square of ``anisotropic'' norm of Hessian.
\begin{lemma}[Kato inequality]\label{lem1} At a point where $\nabla u\neq 0$, we have
\begin{eqnarray}\label{Kato}
a_{ij}a_{kl}u_{ik}u_{jl}\geq a_{ij}F_kF_lu_{ik}u_{jl}.
\end{eqnarray}
\end{lemma}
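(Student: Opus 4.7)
The plan is to expand the term $a_{kl}$ on the left-hand side using the definition \eqref{add_1}, isolate the desired cancellation, and then reduce the remainder to a pairing of two positive semi-definite matrices.

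First I would rewrite the left-hand side. Since $a_{kl}=F_kF_l+FF_{kl}$, we have
\begin{equation*}
a_{ij}a_{kl}u_{ik}u_{jl}\;=\;a_{ij}F_kF_l u_{ik}u_{jl}\;+\;F\,a_{ij}F_{kl}u_{ik}u_{jl}.
\end{equation*}
The first term on the right matches exactly the right-hand side of \eqref{Kato}, so \eqref{Kato} reduces to showing that
\begin{equation*}
F(\nabla u)\cdot a_{ij}F_{kl}u_{ik}u_{jl}\;\geq\;0.
\end{equation*}
At any point where $\nabla u\neq 0$ we have $F(\nabla u)>0$ (since $F$ is positive away from the origin), so it suffices to prove
\begin{equation*}
a_{ij}F_{kl}u_{ik}u_{jl}\;\geq\;0.
\end{equation*}

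The key observation is that both symmetric matrices appearing here are positive semi-definite: $a_{ij}(\nabla u)$ is the Hessian of the strictly convex function $\tfrac12 F^2$ evaluated at $\nabla u$ (positive definite in fact, by the strong convexity assumption made at the start of the section), and $F_{ij}(\nabla u)$ is the Hessian of the convex function $F$ (positive semi-definite, with $\nabla u$ as a null direction by Proposition \ref{property}(ii)). To exploit this, I would introduce the auxiliary symmetric matrix
\begin{equation*}
D_{ij}\;:=\;F_{kl}(\nabla u)\,u_{ik}u_{jl}
\end{equation*}
and check that $D$ is positive semi-definite: for any vector $v\in\mathbb{R}^n$, setting $w_k:=v_i u_{ik}$ gives
\begin{equation*}
v_i D_{ij}v_j\;=\;F_{kl}(\nabla u)\,w_k w_l\;\geq\;0,
\end{equation*}
using that $F_{kl}(\nabla u)$ is positive semi-definite.

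Finally, with both $(a_{ij})$ and $(D_{ij})$ positive semi-definite, the quantity $a_{ij}D_{ij}=\operatorname{tr}(aD)$ is non-negative by the standard fact that the trace of the product of two positive semi-definite symmetric matrices is non-negative (diagonalize $a=S^{T}\Lambda S$ and note that $SDS^{T}$ is again positive semi-definite, so its trace against the non-negative diagonal $\Lambda$ is non-negative). This establishes \eqref{Kato}. The only step that required any insight was recognizing the correct matrix $D$ to form; the rest is bookkeeping with the definition of $a_{ij}$ and the convexity of $F$ and $\tfrac12 F^2$.
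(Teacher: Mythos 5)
Your proof is correct. Both you and the authors begin by expanding $a_{kl}=F_kF_l+FF_{kl}$ to cancel the right-hand side, leaving the task of showing $F\,a_{ij}F_{kl}u_{ik}u_{jl}\geq 0$. The routes diverge at this point: the paper expands $a_{ij}$ a \emph{second} time into $F_iF_j+FF_{ij}$ and handles the two resulting terms separately, the piece $F_iF_jF_{kl}u_{ik}u_{jl}$ by pulling $F_iu_{ik}$ into a single vector, and the piece $F_{ij}F_{kl}u_{ik}u_{jl}$ by diagonalizing $(F_{kl})=O^T\Lambda O$ and reading off $\operatorname{tr}(\Lambda\widetilde U\Lambda\widetilde U)=\sum\mu_i\mu_j\tilde u_{ij}^2\geq 0$. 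You instead keep $a_{ij}$ intact, observe that $D_{ij}:=F_{kl}u_{ik}u_{jl}$ is symmetric and positive semi-definite (since it is a Gram-type conjugation of the PSD matrix $(F_{kl})$ by the Hessian of $u$), and invoke the general fact that $\operatorname{tr}(aD)\geq 0$ for PSD symmetric $a,D$. Your route is a bit cleaner: it avoids the second decomposition of $a_{ij}$ and absorbs the case analysis into one standard linear-algebra fact, at the modest cost of needing to know that fact. The paper's more explicit diagonalization has the advantage of setting up notation ($\mu_i$, $\tilde u_{ij}$) that is reused in the refinement Lemma~\ref{basic lem} immediately afterward, which is presumably why the authors chose it. Both arguments are complete and rest on the same two ingredients: the convexity of $F$ (PSD of $F_{\xi\xi}$) and the strong convexity of $\tfrac12F^2$ (PSD of $a_{ij}$).
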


\begin{proof}
It is clear that
$$a_{ij}a_{kl}u_{ik}u_{jl}-a_{ij}F_kF_lu_{ik}u_{jl}=a_{ij}FF_{kl}u_{ik}u_{jl}=FF_{i}F_{j}F_{kl}u_{ik}u_{jl}+F^2F_{ij}F_{kl}u_{ik}u_{jl}.$$
Since $(F_{ij})$ is positive definite, we know the first term
\begin{equation*}
FF_{i}F_{j}F_{kl}u_{ik}u_{jl}=FF_{kl}(F_iu_{ik})(F_ju_{jl})\ge 0.
\end{equation*}
The second term $F_{ij}F_{kl}u_{ik}u_{jl}$ is nonnegative as well. Indeed, we can write the matrix $(F_{kl})_{k,l}=O^T\Lambda O$ for some orthogonal matrix $O$ and diagonal matrix $\Lambda=diag(\mu_1,\mu_2,\cdots,\mu_n)$ with $\mu_i\geq0$ for any $i=1,2,\cdots,n$. Set $U=(u_{ij})_{i,j}$ and $\widetilde{U}=OUO^T=(\tilde{u}_{ij})_{i,j}$. Then we have \begin{eqnarray*}
F_{ij}F_{kl}u_{lj}u_{ki}&=& tr(O^T\Lambda OUO^T\Lambda OU)=tr(\Lambda OUO^T\Lambda OUO^T)\\&=& tr(\Lambda \widetilde{U}\Lambda \widetilde{U})=\mu_i\mu_j\tilde{u}_{ij}^2\geq 0,
\end{eqnarray*}
and hence the proof of (\ref{Kato}).
\end{proof}
When $F(\xi)=|\xi|$, (\ref{Kato}) is the usual Kato inequality
\[|\nabla^2 u|^2\ge
|\nabla |\nabla u||^2. \]
The following inequality is crucial to apply the gradient comparison argument in the next Section.

\begin{lemma}\label{basic lem} At a point where $\nabla u\neq 0$, we have
 \begin{equation}\label{eq1}
 a_{ij}a_{kl}u_{ik}u_{jl} \ge \frac{(a_{ij}u_{ij})^2}{n}+\frac{n}{n-1}\left(\frac{a_{ij}u_{ij}}{n}-F_iF_ju_{ij}
\right)^2
\end{equation}
\end{lemma}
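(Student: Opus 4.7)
My approach is to reformulate the left-hand side of (\ref{eq1}) as the squared Hilbert--Schmidt norm of a single symmetric matrix and then apply a refined Cauchy--Schwarz inequality with a carefully chosen unit test vector.

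Write $A=(a_{ij})$ and $U=(u_{ij})$. By the strong convexity of $F$, $A$ is symmetric positive definite at any point where $\nabla u\neq 0$, so its square root $A^{1/2}$ is well-defined. A short relabelling of indices, using symmetry of $A$ and $U$ together with cyclicity of the trace, shows
$$a_{ij}a_{kl}u_{ik}u_{jl}=\tr\bigl((AU)^2\bigr)=\tr(M^2),\qquad M:=A^{1/2}UA^{1/2},$$
so $M$ is symmetric with $\tr(M)=\tr(AU)=a_{ij}u_{ij}=Qu$. The problem is thus reduced to estimating $\tr(M^2)$ for a single symmetric matrix.

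Next I would use the following elementary fact: for any symmetric $n\times n$ matrix $M$ and any unit vector $\nu\in\mathbb{R}^n$,
$$\tr(M^2)\geq\langle M\nu,\nu\rangle^2+\frac{(\tr M-\langle M\nu,\nu\rangle)^2}{n-1}=\frac{(\tr M)^2}{n}+\frac{n}{n-1}\left(\frac{\tr M}{n}-\langle M\nu,\nu\rangle\right)^2.$$
To prove this, extend $\nu$ to an orthonormal basis and write $M$ in block form as $\begin{pmatrix}\langle M\nu,\nu\rangle & b^T\\ b & C\end{pmatrix}$; then $\tr(C)=\tr(M)-\langle M\nu,\nu\rangle$, and standard Cauchy--Schwarz on the $(n-1)\times(n-1)$ block yields $\tr(C^2)\geq\tr(C)^2/(n-1)$. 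Dropping the nonnegative off-diagonal contribution $2|b|^2$ gives the displayed inequality, and the algebraic identity between the two forms on the right is a routine expansion.

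The crux is then to choose $\nu$ so that $\langle M\nu,\nu\rangle=F_iF_ju_{ij}$. Writing $v:=F_\xi(\nabla u)=(F_1,\ldots,F_n)$, the natural candidate is $\nu:=A^{-1/2}v$, since then $A^{1/2}\nu=v$ and $\langle M\nu,\nu\rangle=v^TUv=F_iF_ju_{ij}$. To see that this $\nu$ actually has unit norm, I would apply Proposition \ref{property} at $\xi=\nabla u$:
$$a_{ij}u_j=F_iF_ju_j+FF_{ij}u_j=F\cdot F_i+0=Fv_i,$$
i.e.\ $A\nabla u=Fv$, so that $A^{-1}v=\nabla u/F$ and hence $|\nu|^2=v^TA^{-1}v=\langle v,\nabla u\rangle/F=1$. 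Substituting into the refined Cauchy--Schwarz inequality then gives (\ref{eq1}). The whole argument is essentially linear algebra; the only genuinely nontrivial input is the identity $A\nabla u=F\,F_\xi(\nabla u)$, the Finsler analogue of $\operatorname{Hess}(\tfrac12|\xi|^2)=\mathrm{Id}$ in the Euclidean case, and I expect spotting this identity and the corresponding unit test vector $\nu$ to be the main (and essentially only) obstacle.
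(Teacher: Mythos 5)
Your proposal is correct; every step checks out (the trace identity $a_{ij}a_{kl}u_{ik}u_{jl}=\tr\bigl((AU)^2\bigr)$, the block-matrix inequality, and the normalization $|\nu|^2=v^TA^{-1}v=\langle F_\xi(\nabla u),\nabla u\rangle/F=1$ via Euler's relations $F_ju_j=F$, $F_{ij}u_j=0$). The paper argues differently on the surface: it splits $a_{ij}=F_iF_j+FF_{ij}$, writes the right-hand side as $A^2+\tfrac{1}{n-1}B^2$ with $A=F_iF_ju_{ij}$, $B=FF_{ij}u_{ij}$, expands the left-hand side into $A^2$ plus a cross term plus $F^2F_{ij}F_{kl}u_{ik}u_{jl}$, discards the nonnegative cross term, and bounds the last term below by $\tfrac{1}{n-1}B^2$ by diagonalizing $(F_{ij})$ and invoking that this matrix has rank $n-1$ (since $F_{ij}u_j=0$). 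The two arguments are in fact structurally parallel --- your discarded off-diagonal block $2|b|^2$ is exactly the paper's cross term $2FF_iF_jF_{kl}u_{ik}u_{jl}$, and your Cauchy--Schwarz on the $(n-1)\times(n-1)$ block $C$ plays the role of the paper's H\"older inequality on the $n-1$ nonzero eigenvalues of $(F_{ij})$ --- but your packaging via $M=A^{1/2}UA^{1/2}$ and the unit test vector $\nu=A^{-1/2}F_\xi(\nabla u)$ is cleaner: it makes the source of the factor $\tfrac{1}{n-1}$ transparent, avoids any explicit rank computation for $(F_{ij})$, and isolates the one genuinely Finslerian input, namely $A\nabla u=F\,F_\xi(\nabla u)$. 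The only hypothesis you should flag is that $A$ is positive definite at points where $\nabla u\neq 0$, which the paper secures in Section \ref{Pre} by reducing to a strongly convex norm $F\in C^3$; your argument needs this to form $A^{\pm 1/2}$, whereas the paper's only needs semidefiniteness of $(F_{ij})$ at this step.
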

\begin{proof}
 Let
\[A=F_iF_ju_{ij} \quad \hbox{ and } B=FF_{ij} u_{ij}.\]
The right hand side of (\ref{eq1}) equals to
\begin{equation*} \label{eq2}\begin{array}{rcl}
&& \ds\vs
\frac{(A+B)^2} n+\frac{n}{n-1}\left(\frac{B}{n}
-\frac{n-1}{n} A\right)^2 =\ds  A^2+\frac{1}{n-1}B^2.
\end{array}
\end{equation*}
The left hand side of (\ref{eq1}) is
\begin{equation*}\label{eq3}
 A^2+2FF_{i}F_{j}F_{kl}u_{ik}u_{jl}+F^2F_{ij}F_{kl}u_{ik}u_{jl}.
\end{equation*}
Since $(F_{ij})$ is semi-positively definite, we know
\begin{equation*}
 \label{eq4}FF_{i}F_{j}F_{kl}u_{ik}u_{jl}=FF_{kl}(F_iu_{ik})(F_ju_{jl})\ge 0.
\end{equation*}
%Assume at a fixed point $x\in \mathbb{R}^n$, $F_{kl}=\l_k \d_{kl}$. We have
Using the same notations in the proof of Lemma \ref{lem1}, we have
\begin{eqnarray*}
F^2F_{ij}F_{kl}u_{ik}u_{jl}=F^2\mu_i\mu_j\tilde{u}_{ij}^2= F^2\mu_i^2\tilde{u}_{ii}^2+F^2\sum_{i\neq k} \mu_i\mu_k \tilde{u}_{ik}^2\geq F^2\mu_i^2\tilde{u}_{ii}^2,
\end{eqnarray*}
\begin{eqnarray*}
B=FF_{ij}u_{ij}= tr(O^T\Lambda OU)=tr(\Lambda OUO^T)=\mu_i\tilde{u}_{ii}.
\end{eqnarray*}
We claim that  $(F_{ij})$ is a matrix of rank $n-1$, in other words,
one of $\mu_i$ is zero.
Firstly, $F_{ij}u_j=0$. Secondly, for any nonzero $V\perp F_\xi(\nabla u)$, $F_{ij}V^iV^j=a_{ij}V^iV^j>0$. The claim follows easily.
 Thus the H\"older
inequality gives
\[
 F^2\mu_i^2\tilde{u}_{ii}^2\ge \frac {1}{n-1} F^2(\mu_i\tilde{u}_{ii})^2= \frac {1}{n-1} B^2.
\]
Altogether we complete the proof of the Lemma.
\end{proof}

When $F(\xi)=|\xi|$, then (\ref{eq1}) is
\begin{equation*}\label{x1}
|\nabla^2 u|^2\ge \frac {(\Delta u)^2}n+\frac n{n-1}\left(\frac {\Delta u} n-\frac{u_iu_ju_{ij}}{|\nabla u|^2}\right)^2.\end{equation*}

%If one compares the proof of (\ref{eq1}) with that of (\ref{x1}), one may find that
%the former is more ``natural'' than the latter.

\

We now recall the definition of $F$-mean curvature.
Let $\Omega\subset\mathbb{R}^n$ be a smooth bounded domain, whose boundary $\partial\Omega$ is a
$(n-1)$-dimensional, oriented, compact submanifold without boundary in $\mathbb{R}^n$ .
 We denote by $\nu$ and $d\sigma$ the outward normal of $\partial\Omega$ and area element respectively.  Let
$\{e_\alpha\}_{\alpha=1}^{n-1}$ be a basis of the tangent space
$T_p (\partial\Omega)$ and
$g_{\alpha\beta}=g(e_\alpha,e_\beta)$ and $h_{\alpha\beta}$ be the first and second fundamental form respectively. $\partial \Omega$ is called weakly convex, if $(h_{\alpha \beta})$ is nonnegative definite.
Moreover let $(g^{\alpha\beta})$ be the inverse matrix
of $(g_{\alpha\beta})$ and $\overline{\nabla}$  the covariant
derivative in $\mathbb{R}^n$. The $F$-second fundamental form $h_{\alpha\beta}^F$ and  $F$-mean curvature $H_F$ are defined by
\begin{eqnarray*}
h_{\alpha\beta}^F:=\langle F_{\xi\xi}\circ\overline{\nabla}_{e_\alpha}\nu,e_\beta\rangle
\end{eqnarray*}
and
\begin{eqnarray*}
H_F=\sum_{\alpha,\beta=1}^{n-1} g^{\alpha\beta}h_{\alpha\beta}^F
\end{eqnarray*}
respectively.
$\overrightarrow{H_F}=-H_F\nu$ are called $F$-mean curvature vector (it is easy to check that all definitions are independent of the choice of coordinate).
$\partial \Omega$ is called weakly $F$-convex (F-mean convex, resp.) if
$(h^F_{\alpha\beta})$ is nonnegative definite ($H_F\ge 0$ resp.).
It is well known that when we consider a variation of $\partial\Omega$ with
variation vector field $\varphi\in C_0^\infty(\partial\Omega,\mathbb{R}^n)$,
 the first
variation of the $F$-area functional
$\mathcal{F}(X):=\int_{\partial\Omega} F(\nu) d\sigma$ reads as
$$\delta_\varphi\mathcal{F}(X)=-\int_{\partial\Omega}
\langle \overrightarrow{H_F},\varphi \rangle d\sigma.$$
It is easy to see from the convexity of $F$ that $h_{\alpha\beta}^F$ being nonnegative definite is equivalent that the ordinary
second fundamental  form $h_{\alpha\beta}$ being  nonnegative definite, in other words, there is no difference between weakly $F$-convex and weakly convex.
However, $F$-mean convex is different from mean convex.
For more properties of $H_F$, we refer to \cite{WX1} and reference therein. Here we will use the following lemma in \cite{WX1}, which interprets  the relation between Finsler-Laplacian and $F$-mean curvature of level sets of functions.
\begin{lemma}[\cite{WX1}, Theorem 3]\label{F mean curvature}
Let $u$ be a $C^2$ function with a regular level set $S_t:=\{x\in\overline{\Omega}|u=t\}$.
Let $H_F(S_t)$ be the $F$-mean
curvature of the level set $S_t$.  We then have
\begin{equation*}
 Qu(x)=-F H_F(S_t)+F_iF_j u_{ij}= -F H_F(S_t)+\frac{\partial^2 u}{\partial \nu_F^2}\end{equation*}
for $x$ with $u(x)=t$, where $\nu_F:=F_{\xi}(\nu) =-F_\xi
(\nabla u)$.
\end{lemma}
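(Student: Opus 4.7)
The plan is to reduce the identity to a direct calculation of $H_F(S_t)$ in terms of the Hessian of $u$, using the decomposition $a_{ij}=F_iF_j+FF_{ij}$ already recorded in (\ref{add_1}). Writing
\[
Qu = a_{ij}u_{ij} = F_iF_j u_{ij} + F\,F_{ij}u_{ij},
\]
the claim $Qu=-F H_F(S_t)+F_iF_j u_{ij}$ is equivalent to
\[
F_{ij}(\nabla u)\,u_{ij} \;=\; -H_F(S_t).
\]
The second equality in the lemma is then just the observation that $\nu_F=-F_\xi(\nabla u)$ has components $-F_i$, so $\partial^2 u/\partial\nu_F^2=F_iF_j u_{ij}$; here I use that $F$ is even (so $F_\xi$ is odd) and $0$-homogeneous, combined with $\nu=-\nabla u/|\nabla u|$ on a regular level set.

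To prove $F_{ij}u_{ij}=-H_F(S_t)$, I would pick a point $p\in S_t$ and an orthonormal basis $\{e_\alpha\}_{\alpha=1}^{n-1}$ of $T_p S_t$, so that $g^{\alpha\beta}=\delta^{\alpha\beta}$. Starting from $\nu^i=-u_i/|\nabla u|$, differentiation gives
\[
(\overline{\nabla}_{e_\alpha}\nu)^j \;=\; -\frac{u_{jk}e_\alpha^k}{|\nabla u|}+\frac{u_j}{|\nabla u|^3}u_{l m}e_\alpha^m u_l,
\]
where the second summand is proportional to $\nabla u$, hence to $\nu$. The key point is that the matrix $F_{\xi_i\xi_j}(\nabla u)$ annihilates $\nabla u$ by Proposition \ref{property}(ii); therefore the radial term is killed when we contract with $F_{\xi\xi}$. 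Taking into account that $F_{\xi\xi}$ is $(-1)$-homogeneous and $F_{\xi\xi}(\nu)=|\nabla u|\,F_{\xi\xi}(\nabla u)$, this yields
\[
h^F_{\alpha\beta}=\langle F_{\xi\xi}(\nu)\,\overline{\nabla}_{e_\alpha}\nu,e_\beta\rangle \;=\; -F_{ij}(\nabla u)\,u_{jk}\,e_\alpha^k e_\beta^i.
\]

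Summing the trace $H_F=\sum_\alpha h^F_{\alpha\alpha}$ and using the projection identity $\sum_\alpha e_\alpha^k e_\alpha^i=\delta^{ki}-\nu^k\nu^i$, I get
\[
H_F(S_t)=-F_{ij}u_{jk}(\delta^{ki}-\nu^k\nu^i)=-F_{ij}u_{ij}+F_{ij}u_{jk}\nu^k\nu^i.
\]
A second application of Proposition \ref{property}(ii), namely $F_{ij}(\nabla u)\,\nu^i=0$ (since $\nu\parallel\nabla u$), eliminates the last term and gives $H_F=-F_{ij}u_{ij}$, which combined with the opening decomposition of $Qu$ completes the proof.

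The computation itself is routine; the only subtle point is the bookkeeping with the homogeneity of $F_\xi$ and $F_{\xi\xi}$ and the two invocations of $F_{\xi_i\xi_j}\xi_j=0$ — one to discard the $|\nabla u|$-derivative term in $\overline{\nabla}_{e_\alpha}\nu$, and one to discard the normal contribution after tracing. These are what I expect to be the main (minor) obstacle, since getting them wrong easily produces spurious factors of $|\nabla u|$ or extra tangential terms.
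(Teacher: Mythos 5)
Your computation is correct: the decomposition $Qu=F_iF_ju_{ij}+FF_{ij}u_{ij}$, the identity $F_{\xi\xi}(\nu)=|\nabla u|\,F_{\xi\xi}(\nabla u)$ from $(-1)$-homogeneity and evenness, and the two uses of $F_{\xi_i\xi_j}(\xi)\xi_j=0$ to kill the radial terms together give $H_F(S_t)=-F_{ij}(\nabla u)u_{ij}$, which is exactly what is needed, and your result agrees with the classical case $F=|\cdot|$ and with the sign convention ($\nu=-\nabla u/|\nabla u|$ outward) fixed in the lemma. Note that the paper itself gives no proof here — it only cites \cite{WX1}, Theorem 3, and records a sign correction to that reference — so your direct verification is the natural argument and fills in what the paper leaves to the citation.
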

We point out that  we have used the inward normal in \cite{WX1} and there is an sign error in the formula (5) there. Hence the term $FH_F(S_t)$ in the formula (9) there should be read as $-FH_F(S_t)$.

\

\section{Sharp estimate of the first Neumann  eigenvalue}\label{NFG}

It is well-known that the existence of Neumann first eigenfunction can be obtained from the direct method in the calculus of variations. We note that the first Neumann  eigenfunction must change sign, for its average vanishes.

In this Section we first prove the following gradient comparison theorem, which is the most crucial part for the proof of the sharp estimate. For simplicity, we write $\lambda_1$ instead of $\lambda_1^N$ through this section.

\begin{theorem} \label{Comparison thm}
Let $\Omega,u,\lambda_1$  be as in Theorem \ref{main thm2}. Let $v$ be a solution of the 1-D model problem on some interval $(a,b)$:

\begin{equation}\label{1-D model}
v''-Tv'=-\lambda_1 v,\quad v'(a)=v'(b)=0,\quad v'>0,
\end{equation}
with $T(t)=-\frac{n-1}{t} \hbox{ or } 0.$
Assume that $[\min u,\max u]\subset [\min v,\max v]$,
then
\begin{equation}\label{grad comparison}
F(\nabla u)(x)\leq v'( v^{-1}(u(x))).
\end{equation}

\end{theorem}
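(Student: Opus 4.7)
The strategy is to apply the maximum principle to
\[
P(x) := \tfrac{1}{2} F^2(\nabla u)(x) - \tfrac{1}{2}\phi(u(x))^2,
\]
where $\phi(s) := v'(v^{-1}(s))$. Reparametrizing by $s = v(t)$ turns the second-order ODE (\ref{1-D model}) into the first-order relation $\phi \phi' - T(v^{-1}(s))\phi + \lambda_1 s = 0$, with $\phi \geq 0$ vanishing at the endpoints, so that (\ref{grad comparison}) is equivalent to $P \leq 0$ on $\overline{\Omega}$. By slightly enlarging the interval $(a,b)$ one may assume the strict inclusion $[\min u, \max u] \subset (\min v, \max v)$ so that $\phi(u) > 0$, and pass to the limit at the end.

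Assuming for contradiction that $P$ attains a positive maximum at some $x_0 \in \overline{\Omega}$, the analysis splits into three cases. Suppose first that $x_0$ is interior with $\nabla u(x_0) \neq 0$. Then $\nabla P = 0$ and $a_{ij} P_{ij} \leq 0$ at $x_0$. Expanding $\nabla P = 0$ and contracting with $F_\xi(\nabla u)$ together with Proposition \ref{property}(i) yields the pointwise identity $F_k F_l u_{kl} = \phi(u)\phi'(u)$. Applying the Bochner formula (\ref{Bochner}) to $a_{ij}(\tfrac{1}{2} F^2)_{ij}$, substituting $Qu = -\lambda_1 u$, and using the refined lower bound (\ref{eq1}) from Lemma \ref{basic lem} together with the ODE for $\phi$, the inequality $a_{ij} P_{ij}(x_0) \leq 0$ should reduce to a strict inequality with the opposite sign. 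The extra quadratic term $\tfrac{n}{n-1}\bigl(Qu/n - F_iF_j u_{ij}\bigr)^2$ in (\ref{eq1})---the anisotropic analogue of the Bakry--Qian extended curvature-dimension inequality---is exactly what is needed to absorb the cross-terms produced by expanding $a_{ij}(\tfrac{1}{2}\phi^2(u))_{ij}$ under the constraint $\nabla P = 0$.

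The second case, $\nabla u(x_0) = 0$ at an interior point, is trivial: then $F(\nabla u)(x_0) = 0$ while $\phi(u(x_0)) > 0$, so $P(x_0) < 0$. The third case $x_0 \in \partial\Omega$ is the principal difficulty, because the Neumann condition (\ref{N boundary}) is phrased in terms of $F_\xi(\nabla u)$ rather than $\nabla u$, so the naive tangential-derivative computation of $\partial_\nu F^2(\nabla u)$ does not close. Following the hint in the introduction, I would construct a smooth vector field $V$ on a tubular neighborhood of $\partial\Omega$ extending $F_\xi(\nu)$, and differentiate $P$ along $V$ rather than $\nu$. The Neumann condition then makes $V(u) = \langle V, \nabla u\rangle$ vanish to the required order on $\partial\Omega$, and the weak convexity of $\partial\Omega$ (equivalently $h^F_{\alpha\beta} \geq 0$, as recorded before Lemma \ref{F mean curvature}) forces $V(P)(x_0) > 0$, in violation of the maximality of $x_0$.

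The main obstacle is precisely this boundary step: choosing $V$ so that the Neumann condition translates into the correct Hopf-type sign while weak convexity supplies the boundary curvature term. Once all three cases are ruled out one concludes $P \leq 0$ for the perturbed $\phi$, and passing to the limit recovers (\ref{grad comparison}) for the original $v$.
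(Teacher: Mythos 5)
There are two genuine gaps, one of which is fatal to the proposed argument.

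\textbf{The missing multiplier $\psi$.} Your test function $P = \tfrac12 F^2(\nabla u) - \tfrac12\phi(u)^2$ is, up to the change of notation $\phi_{\text{paper}} = \tfrac12\phi_{\text{yours}}^2$, the paper's $P = \psi(u)\bigl(\tfrac12 F^2(\nabla u) - \phi(u)\bigr)$ with $\psi \equiv 1$. That simplification does not close. After applying the Bochner formula, (\ref{eq1}), and the relation $\nabla P = 0$ at an interior critical point, one arrives at an inequality of the form $0 \geq a_1 P^2 + a_2 P + a_3$ (this is the paper's (\ref{second derivative N2})). The choice of $\phi$ you propose does kill $a_3$, but with $\psi \equiv 1$ one has $a_1 = 0$ and $a_2 = -2\lambda_1 - 2\phi''$. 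For the $T=0$ model, $v = \sin(\sqrt{\lambda_1}t)$ gives $\phi'' = v'''/v' = -\lambda_1$, hence $a_2 = 0$ identically and the inequality becomes $0 \geq 0$ --- no contradiction, no conclusion. For the $T = -\tfrac{n-1}{t}$ model one computes $a_2 = -\tfrac{2n}{n-1}T^2 + 2\lambda_1 T v/v'$, which can change sign. The entire point of the Bakry--Qian technique (which the paper invokes via their Corollary 3) is to construct a nontrivial $\psi$ --- equivalently, an auxiliary $f$ solving the Riccati-type differential inequalities $f' < \min\{Q_1(f), Q_2(f)\}$ --- so that $a_1, a_2 > 0$ everywhere. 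Without that construction, the step you describe as ``should reduce to a strict inequality with the opposite sign'' simply does not happen.

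\textbf{The boundary vector field.} For the boundary maximum you propose $V = F_\xi(\nu) = \nu_F$. But the Neumann condition reads $\langle F_\xi(\nabla u), \nu\rangle = 0$; this is a statement about the Legendre dual of $\nabla u$ being tangent, and it does \emph{not} imply $\langle \nabla u, F_\xi(\nu)\rangle = 0$ for an anisotropic $F$. Consequently $\partial u / \partial V$ does not vanish and the $\phi$-term in $\partial P/\partial V$ contaminates the computation. The paper's choice is $V^i = a_{ij}(\nabla u)\nu^j$: it is outward-pointing since $(a_{ij})$ is positive definite, and by Euler's identity $u_i a_{ij} = F F_j$, so $\partial u/\partial V = F\, F_j \nu^j = 0$ exactly by the Neumann condition. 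That cancellation is what lets the tangential differentiation of $F_i\nu^i = 0$, together with weak convexity, force $\partial P/\partial V(x_0) \leq 0$ and hence $\nabla P(x_0) = 0$, reducing the boundary case to the interior one. With your $V$ this reduction fails.

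Your interior case for $\nabla u(x_0) = 0$ and the overall strategy (maximum principle on a one-dimensional-model comparison quantity, strict inclusion of the range with a limiting argument at the end) are sound, but without the $\psi$ factor and the correct $V$ the proof cannot be completed.
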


\begin{proof} First, since $\int u=0$, we know that $\min u<0$ while $\max u>0$. We may assume that $[\min u,\max u]\subset (\min v,\max v)$ by multiplying $u$ by a constant $0<c<1$. If we prove the result for this $u$, then letting $c\to 1$ we have
 (\ref{grad comparison}).%implies original statement.

Under the condition $[\min u,\max u]\subset (\min v,\max v)$, $v^{-1}$ is smooth on a neighborhood $U$ of $[\min u,\max u]$.

Consider  $P:=\psi(u)(\frac{1}{2}F(\nabla u)^2-\phi(u)),$ where $\psi,\phi\in C^\infty (U)$ are two positive smooth functions to be determined later.
We first assume that $P$ attains its maximum at $x_0\in\Omega$, and then we will
consider the case that $x_0\in\partial\Omega$. If $ \nabla u(x_0)=0$, $P\leq 0$ is obvious. Hence we assume $\nabla u(x_0)\neq 0$.
From now on we compute at $x_0$. As in Section \ref{Pre}, we use the notation (\ref{add_1}). Since $x_0$ is the maximum of $P$,
we have that
\begin{equation} \label{gradient N}  P_i(x_0)=0,\end{equation}
\begin{equation}\label{second derivative N}a_{ij}(x_0) P_{ij}(x_0)\leq 0.\end{equation}
%(\ref{gradient N}) gives
%\begin{equation}  F_iF_ju_{ij}=\phi'-\frac{\psi'}{\psi^2}P.\end{equation}
Equality in (\ref{gradient N}) gives
\begin{eqnarray}\label{gradient N1}
\frac{\partial}{\partial x_i}\left(\frac12 F^2(\nabla u)-\phi(u)\right)=-\frac{\psi(u))_i}{\psi^2}P,\quad F_iF_ju_{ij}=\phi'-\frac{\psi'}{\psi^2}P.
\end{eqnarray}
Then we compute $a_{ij}P_{ij}$.
\begin{eqnarray*}
a_{ij}P_{ij}&=&\frac{P}{\psi}a_{ij}(\psi(u))_{ij}+\psi a_{ij}\frac{\partial}{\partial x_ix_j}\left(\frac12 F^2(\nabla u)-(\phi(u))\right)\\
&&+ 2a_{ij}(\psi(u))_i\frac{\partial}{\partial x_j}\left(\frac12 F^2(\nabla u)-\phi(u)\right).
\end{eqnarray*}
It is easy to see from Proposition \ref{property} that \begin{equation}\label{homogeneity}\frac{\partial}{\partial \xi_i}\left(\frac12 F^2\right)(\nabla u)u_i= F^2(\nabla u),\quad a_{ij}u_iu_j=F^2(\nabla u),\quad a_{ijk}u_k=0.\end{equation}
By using (\ref{gradient N1}), (\ref{homogeneity}), the Bochner formula (\ref{Bochner}) and eigenvalue equation (\ref{first eigenvalue}), we get
\begin{eqnarray}\label{second derivative N1} a_{ij}P_{ij}&=&(-\lambda_1 u \frac{\psi'}{\psi}+F^2\frac{\psi''}{\psi}-2F^2\frac{{\psi'}^2}{\psi^2})P\nonumber\\&&+\psi ( a_{ij}a_{kl}u_{ik}u_{jl}-\lambda_1 F^2)+\psi(\lambda_1 u\phi'-F^2\phi'').\end{eqnarray}
Applying Lemma \ref{basic lem} to (\ref{second derivative N1}), replacing $F^2$ by $2\frac{P}{\psi}+\phi$ and using (\ref{gradient N1}), (\ref{first eigenvalue}), (\ref{second derivative N}), we deduce

\begin{equation}\label{second derivative N2}
\begin{array}{rcl}
 0\geq a_{ij}P_{ij}&\geq &  \ds\vs (-\lambda_1 u \frac{\psi'}{\psi}+F^2\frac{\psi''}{\psi}-2F^2\frac{{\psi'}^2}{\psi^2})P+\psi(\lambda_1 u\phi'-F^2\phi'')\\
 && \ds\vs +\psi \left(\frac{(a_{ij}u_{ij})^2}{n}+\frac{n}{n-1}\left(\frac{a_{ij}u_{ij}}{n}-F_iF_ju_{ij}
\right)^2-\lambda_1 F^2\right)\\ &=& \ds\vs \frac{1}{\psi}\left[2\frac{\psi''}{\psi}-(4-\frac{n}{n-1})\frac{\psi'^2}{\psi^2}\right]P^2\\
&&\ds\vs +\left[2\phi\left(\frac{\psi''}{\psi}-2\frac{\psi'^2}{\psi^2}\right)-\frac{n+1}{n-1}\frac{\psi'}{\psi}\lambda_1 u-\frac{2n}{n-1}\frac{\psi'}{\psi}\phi'-2\lambda_1-2\phi''\right]P\\
&&\ds\vs +\psi\left[\frac{1}{n-1}\lambda_1^2u^2+\frac{n+1}{n-1}\lambda_1u \phi'+\frac{n}{n-1}\phi'^2-2\lambda_1\phi-2\phi\phi''\right]\\
&:=& \ds a_1 P^2+a_2P+a_3.\end{array}\end{equation}

We are lucky to observe that the coefficients $a_i,$ $i=1,2,3$, coincide with  those appearing in the
ordinary Laplacian case (see e.g. \cite{BQ}, Lemma 1).
The
next step is to choose suitable positive functions $\psi$ and $\phi$ such that $a_1,a_2>0$ everywhere and $a_3=0$, which had already be done in \cite{BQ}. For completeness, we sketch the main idea here.

Choose $\phi(u)=\frac{1}{2}v'(v^{-1}(u))^2$, where $v$ is a solution of 1-D problem (\ref{1-D model}).
One can compute that $$\phi'(u)=v''(v^{-1}(u)),\phi''(u)=\frac{v'''}{v'}(v^{-1}(u)).$$
Setting $t=v^{-1}(u)$ and $u=v(t)$ we have
\begin{eqnarray*}\frac{a_3(t)}{\psi}&=&\frac{1}{n-1}\lambda_1^2v^2+\frac{n+1}{n-1}\lambda_1 v v''+\frac{n}{n-1}v''^2-\lambda_1v'^2-v'v'''
\\&=&-v'(v''-Tv'+\lambda_1v)'+\frac{1}{n-1}(v''-Tv'+\lambda_1v)(nv''+Tv'+\lambda_1v)=0.\end{eqnarray*}
Here we have used that $T$ satisfies $T'=\frac{T^2}{n-1}.$
For $a_1,a_2$, we introduce $$X(t)=\lambda_1\frac{v(t)}{v'(t)},\quad\psi (u) =\exp(\int h(v(t))), \quad f(t)=-h(v(t))v'(t).$$
With these notations,
we have
\begin{eqnarray*}
f'=-h'v'^2+f(T-X),\end{eqnarray*}
\begin{eqnarray*}
v'|_{v^{-1}}^2 a_1\psi=2f(T-X)-\frac{n-2}{n-1}f^2-2f':=2(Q_1(f)-f'),\end{eqnarray*}
\begin{eqnarray*}
 a_2=f(\frac{3n-1}{n-1}T-2X)-2T(\frac{n}{n-1}T-X)-f^2-f':=Q_2(f)-f'.\end{eqnarray*}
 We may now use Corollary 3 in \cite{BQ}, which says that there exists a bounded function $f$ on $[\min u,\max u]\subset (\min v,\max v)$ such that $f'<\min\{Q_1(f),Q_2(f)\}$.

In view of (\ref{second derivative N2}), we know that by our choice of $\psi$ and $\phi$,
 $P(x_0)\leq 0$, and hence $P(x)\leq 0$ for every $x\in\Omega$, which leads to (\ref{grad comparison}).

\

%Remember that we are remained to handle the case $x_0\in\partial\Omega$.
Now we consider the case $x_0\in \partial \Omega$.
 Suppose that $P$ attains its maximum at $x_0\in\partial\Omega$.
Consider a new vector field  $V(x)=(V^i(x))_{i=1}^n$ defined on $\partial \Omega$ by $$V^i(x)=\sum_{j=1}^n a_{ij}(\nabla u(x))\nu^j(x).$$ Thanks to the positivity of  $a_{ij}$, $V(x)$ must point outward. Hence $\frac{\partial P}{\partial V}(x_0)\geq 0$.

On the other hand, we see from the Neumann boundary condition and homogeneity of $F$ that $$\frac{\partial u}{\partial V}(x_0)=u_i a_{ij}(\nabla u(x))\nu^j=FF_j\nu^j=0 .$$
Thus we have \begin{eqnarray}\label{normal derivative} 0\leq\frac{\partial P}{\partial V}(x_0)=\psi FF_iu_{ij} a_{jk} \nu^k.\end{eqnarray}

Choose now local coordinate $\{e_i\}_{i=1,\cdots,n}$ around $x_0$ such that $e_n=\nu$ and $\{e_\alpha\}_{\alpha=1,\cdots,n-1}$ is the orthonormal basis of tangent space of $\partial \Omega$. Denote by $h_{\alpha\beta}$ the second fundamental form of $\partial\Omega$. By the assumption that $\partial\Omega$ is weakly convex, we know the matrix $(h_{\alpha\beta})\geq 0$. %To be not confused, we use the summation sign.

The Neumann boundary condition implies
\begin{eqnarray}\label{F_n} F_i\nu^i(x_0)=F_n(x_0)=0.\end{eqnarray}
 By taking tangential derivative of (\ref{F_n}), we have
 \begin{equation*}
 \quad D_{e_\beta}(\sum_{i=1}^n F_i\nu^i)(x_0)=0,
 \end{equation*} for any $\beta=1,\cdots,n-1.$
Computing  $D_{e_\beta}(\sum_{i=1}^n F_i\nu^i)(x_0)$ explicitly, we have \begin{eqnarray}\label{tangent derivative}
0=D_{e_\beta}(\sum_{i=1}^n F_i\nu^i)(x_0)&=&\sum_{i,j=1}^n F_{ij}u_{j\beta}\nu^i+\sum_{i=1}^n F_i\nu_\beta^i\nonumber\\
&=&\sum_{i,j=1}^n F_{ij}u_{j\beta}\nu^i+\sum_{i=1}^n \sum_{\gamma=1}^{n-1}F_ih_{\beta\gamma}e_\gamma^i\nonumber\\
&=&\sum_{j=1}^n F_{nj}u_{j\beta}+\sum_{\gamma=1}^{n-1} F_\gamma h_{\beta\gamma}.\end{eqnarray}
In the last equality we have used $\nu_n=1$ and $\nu_\beta=0$ for $\beta=1,\cdots,n-1$ in the chosen coordinate.

Combining (\ref{normal derivative}), (\ref{F_n}) and (\ref{tangent derivative}), we obtain
\begin{eqnarray*}0\leq \frac{\partial P}{\partial V}(x_0)&=&\sum_{i,j,k=1}^{n}\psi FF_iu_{ij} a_{jk} \nu^k =\psi F\sum_{\alpha=1}^{n-1}\sum_{j=1}^n F_\alpha u_{\alpha j}a_{jn}\nonumber\\
&=&\psi F\sum_{\alpha=1}^{n-1}\sum_{j=1}^n F_\alpha u_{\alpha j}F_{jn}= -\psi F\sum_{\alpha,\gamma=1}^{n-1} F_\alpha F_\gamma h_{\alpha\gamma}\leq 0.\end{eqnarray*}
%(Simple calculus yields $$ u_{\alpha n}=-h_{\alpha\beta}u_\alpha.$$
%Hence $$F_iu_{ij}\nu^j(x_0)=\sum_{\alpha=1}^{n-1}\tau_\alpha F_\alpha u_\alpha\leq -\tau F\leq 0.$$)

%(Note: I guess that this may be the correct way. Instead of using the usual second fundamental form,
%we may use the anisotropic second fundamental form $B_F$. I hope the fllowing is true.
%First from $F_n=0$, it is clear that $F_iu_{ij}a_{jk}\nu^k=F_iu_{ij}a_{jn}=FF_iu_{ij}F_{jn}$.
%Then I except that
%$$\sum_{i<n}F_iu_{ij}F_{jn}=\sum_{i,j<n}F_ih^F_{ij}F_j\ge 0.$$
%Please check this formula.!!!)

%Consequenly  $$(F_iu_{ij} a_{jk} \nu^k)(x_0)\leq \mu (F_iu_{ij}\nu^j)(x_0),$$ where $\mu\geq 0$ is the smallest eigenvalue of $(a_{ij})$.

%Therefore $$(F_iu_{ij} a_{jk} \nu^k)(x_0)\leq 0.$$
 Therefore we obtain that $\frac{\partial P}{\partial V}(x_0)=0$. Since the tangent derivatives of $P$ also vanishes, we have $\nabla P(x_0)=0$. It's also the case that (\ref{second derivative N}) holds. Thus the previous proof for an interior maximum also works  in this case. This finishes the proof of Theorem \ref{Comparison thm}.
\end{proof}

\

Following the idea of \cite{BQ}, besides the gradient comparison with the 1-D models, in order to prove the sharp estimate on the first eigenvalue of the Finsler-Laplacian,  we need to study many properties of the 1-D models, such as the difference $\delta(a)=b(a)-a$ as a function of $a\in [0,+\infty]$, where $b(a)$ is the first number that $v'(b(a))=0$ (Note that  $v'> 0\hbox{ in }(a,b(a))$). As we already saw in Theorem \ref{Comparison thm}, the 1-D model (\ref{1-D model}) appears the same as that in the Laplacian case. Therefore, we can use directly the results of \cite{BQ} on the properties of the 1-D models. Here we use some simpler statement from \cite{Va}.

We define $\delta(a)$ as a function of $a\in [0,+\infty]$ as follows. On one hand, we denote $\delta(\infty)=\frac{\pi}{\sqrt{\lambda_1}}$. This number comes from the 1-D model (\ref{1-D model}) with $T=0$. In fact, it is easy to see that solutions of  the 1-D model (\ref{1-D model}) with $T=0$ can be explicitly written as $$v(t)=\sin{\sqrt{\lambda_1}t}$$ up to dilations. Hence in this case, $b(a)-a=\frac{\pi}{\sqrt{\lambda_1}}$ for any $a\in \mathbb{R}$. On the other hand,  we denote $\delta(a)=b(a)-a$ as a function of $a\in [0,+\infty)$ relative to  the 1-D model (\ref{1-D model}) with $T=-\frac{n-1}{x}$.

\

The following property of $\delta(a)$ was proved in  \cite{BQ,Va}.

\begin{lemma}[\cite{BQ} or \cite{Va}, Th. 5.3, Cor. 5.4] \label{thm1}
The function $\delta(a): [0,\infty]\to \mathbb{R}^+ $is a continuous function such that
\begin{eqnarray*}\delta(a)>\frac{\pi}{\sqrt{\lambda_1}},\end{eqnarray*}
\begin{eqnarray*}\delta(\infty)=\frac{\pi}{\sqrt{\lambda_1}}.\end{eqnarray*}
$m(a):=v(b(a))<1$, $\lim_{a\to \infty} m(a)=1$ and $m(a)=1$ if and only if $a=\infty$.
\end{lemma}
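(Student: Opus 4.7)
The case $T=0$ is trivial: the normalized solution is $v(t)=-\cos(\sqrt{\lambda_1}(t-a))$, giving $\delta\equiv\pi/\sqrt{\lambda_1}$, so all substantive content of the lemma concerns the singular model $T(t)=-(n-1)/t$ on $(0,\infty)$. I organize the plan in three steps.

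\emph{Setup and continuity.} Fix the normalization $v(a)=-1$, $v'(a)=0$ and solve $v''+\tfrac{n-1}{t}v'+\lambda_1 v=0$ uniquely on $[a,\infty)$. To produce the first critical point $b(a)>a$ of $v'$, apply the Liouville change of variables $v(t)=t^{-(n-1)/2}w(t)$, which puts the equation in Schr\"odinger form
\begin{equation*}
w''+\Bigl[\lambda_1-\frac{(n-1)(n-3)}{4t^2}\Bigr]w=0.
\end{equation*}
Sturm oscillation then forces infinitely many zeros of $w$, hence of $v$ and of $v'$. Continuity of $\delta(a)=b(a)-a$ on $[0,\infty)$ follows from continuous dependence of the IVP on $a$ together with the implicit function theorem at the simple zero $t=b(a)$ of $v'$; simplicity follows from $v''(b(a))=-\lambda_1 v(b(a))\neq 0$, since $v(b(a))\neq 0$ by the energy estimate of the next step.

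\emph{Properties of $m(a)$ and the limit $a\to\infty$.} Multiplying the ODE by $v'$ gives
\begin{equation*}
\frac{d}{dt}\Bigl(\tfrac12(v')^2+\tfrac12\lambda_1 v^2\Bigr)=-\tfrac{n-1}{t}(v')^2\le 0,
\end{equation*}
with strict inequality wherever $v'\neq 0$. Integrating from $a$ to $b(a)$ yields $\lambda_1 m(a)^2<\lambda_1$, hence $|m(a)|<1$; combined with the monotonicity $m(a)=v(b(a))>v(a)=-1$, this gives $-1<m(a)<1$. For $a\to\infty$, rescale $t=a+s$: the drift $-(n-1)/(a+s)$ converges to $0$ uniformly on bounded $s$-intervals, so by continuous dependence on coefficients the solution converges to that of $v''+\lambda_1 v=0$ with the same initial data, whose first critical point after $a$ is at $s=\pi/\sqrt{\lambda_1}$ with value $+1$. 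This simultaneously yields $\delta(a)\to\pi/\sqrt{\lambda_1}$ and $m(a)\to 1$, extending $\delta$ continuously to $a=\infty$.

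\emph{The strict inequality $\delta(a)>\pi/\sqrt{\lambda_1}$.} This is the main obstacle. The cleanest approach I see is a Pr\"ufer angle argument: set $v=\rho\sin\theta$, $v'/\sqrt{\lambda_1}=\rho\cos\theta$, so a short computation using the ODE yields
\begin{equation*}
\theta'(t)=\sqrt{\lambda_1}+\tfrac{n-1}{2t}\sin(2\theta(t)),\qquad \theta(a)=-\tfrac{\pi}{2},\quad \theta(b(a))=\tfrac{\pi}{2}.
\end{equation*}
Integrating on $[a,b(a)]$ gives $\sqrt{\lambda_1}\delta(a)=\pi-\tfrac{n-1}{2}\int_a^{b(a)}\sin(2\theta(t))/t\,dt$, so the strict inequality reduces to showing the correction integral is strictly negative. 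Heuristically this is true because on $(a,b(a))$ the phase $2\theta$ sweeps from $-\pi$ to $\pi$, so $\sin(2\theta)$ is negative on the earlier sub-interval (smaller $t$, hence larger weight $1/t$) and positive on the later sub-interval (smaller weight $1/t$); the decreasing weight therefore forces a negative integral. Making this precise, via an integration by parts in the phase variable combined with a monotonicity estimate for $\rho^2$, is the delicate step and is carried out in detail in Bakry--Qian \cite{BQ} and streamlined in Valtorta \cite{Va}; I would follow that execution.
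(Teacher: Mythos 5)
The paper itself offers no proof of this lemma: it is stated with the citation ``\cite{BQ} or \cite{Va}, Th.~5.3, Cor.~5.4'' and the authors defer entirely to those references. So there is no in-paper argument to compare against; your sketch has to be judged on its own merits and against the cited sources.

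As a blueprint it is sound and broadly in the spirit of \cite{BQ} and \cite{Va}: the $T=0$ case is indeed trivial, the energy identity $\frac{d}{dt}\bigl(\tfrac12(v')^2+\tfrac12\lambda_1 v^2\bigr)=-\tfrac{n-1}{t}(v')^2$ gives $|m(a)|<1$ exactly as you say, the Liouville reduction to Schr\"odinger form $w''+\bigl[\lambda_1-\tfrac{(n-1)(n-3)}{4t^2}\bigr]w=0$ is correct, and continuous dependence plus the a priori bound on $b(a)-a$ coming from $\theta'\ge\sqrt{\lambda_1}-\tfrac{n-1}{2a}$ justifies the limits $\delta(a)\to\pi/\sqrt{\lambda_1}$, $m(a)\to 1$ as $a\to\infty$. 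The Pr\"ufer ODE $\theta'=\sqrt{\lambda_1}+\tfrac{n-1}{2t}\sin(2\theta)$ is also computed correctly.

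The one genuine gap is the step you yourself flag: the heuristic ``negative phase meets larger weight $1/t$'' for $\int_a^b\sin(2\theta)/t\,dt<0$ tacitly assumes $\theta$ is monotone, which can fail near the singular endpoint $a=0$ where $\tfrac{n-1}{2t}$ is large; and the promised integration by parts in the phase variable does not close by itself, since $\tfrac{d}{dt}\bigl(\tfrac{1}{t\rho^2}\bigr)=-\tfrac{1}{t^2\rho^2}-\tfrac{(\rho^2)'}{t\rho^4}$ has competing signs (the first term is negative, the second is nonnegative because $\rho$ is nonincreasing). You would still have to control that competition. There is a shorter route that avoids the Pr\"ufer angle entirely and closes the strict inequality in one line: differentiate the ODE and set $w=v'$, so that $w$ satisfies
\begin{equation*}
w''+\frac{n-1}{t}\,w'+\Bigl(\lambda_1-\frac{n-1}{t^2}\Bigr)w=0,\qquad w(a)=w(b(a))=0,\quad w>0\ \text{on}\ (a,b(a)).
\end{equation*}
Applying the same Liouville substitution $w=t^{-(n-1)/2}\tilde w$ gives $\tilde w''+\bigl[\lambda_1-\tfrac{(n-1)(n+1)}{4t^2}\bigr]\tilde w=0$, whose potential is strictly below $\lambda_1$ for every $t>0$ and every $n\ge 2$. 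Sturm comparison with $y''+\lambda_1 y=0$ then gives that consecutive zeros of $\tilde w$ (equivalently of $w=v'$) are separated by strictly more than $\pi/\sqrt{\lambda_1}$, i.e.\ $\delta(a)>\pi/\sqrt{\lambda_1}$, uniformly in $a\in[0,\infty)$. I would substitute this for the Pr\"ufer heuristic; the rest of your plan then stands.
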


In order to prove the main result, we also need the following comparison theorem on the maximum values of eigenfunctions. This theorem is obtained as a consequence of a standard property of the volume of small balls with respect to some invariant measure (see \cite{BQ}, Section 6).

\begin{lemma}
\label{thm2}
Let $\Omega,u,\lambda_1$  be as in Theorem \ref{main thm2}. Let $v$ be a solution of the 1-D model problem on some interval $(0,\infty)$:

\begin{equation*}
v''=-\frac{n-1}{t}v'-\lambda_1 v,\quad v(0)=-1,\quad v'(0)=0.
\end{equation*}
Let $b$ be the first number after $0$ with $v'(b)=0$  and denote $m=v(b)$. Then $\max u\geq m.$

\end{lemma}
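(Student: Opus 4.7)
My approach would follow Bakry--Qian \cite{BQ}, Section 6, essentially verbatim, since the one-dimensional model here coincides with the Laplacian case and the gradient comparison (Theorem \ref{Comparison thm}) has the same structural form (with $|\nabla u|$ replaced by $F(\nabla u)$). Argue by contradiction: after normalizing so that $\min u = -1$ (replacing $u$ by $cu$, still an eigenfunction), suppose $M := \max u < m$.

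The first step invokes Theorem \ref{Comparison thm}. Because $[\min u, \max u] = [-1, M] \subset [-1, m] = [\min v, \max v]$ under the contradiction hypothesis,
$$F(\nabla u)(x) \le v'\bigl(v^{-1}(u(x))\bigr), \qquad x \in \Omega.$$
Equivalently, the composition $\Phi(x) := v^{-1}(u(x)) \in [0, b]$ satisfies $F(\nabla \Phi) \le 1$, so $\Phi$ is $1$-Lipschitz with respect to the Finsler distance $d_F$. Fixing $x_0 \in \Omega$ with $u(x_0) = -1$, one obtains the Wulff-ball inclusion $\mathcal{W}_t(x_0) \cap \Omega \subset \{u \le v(t)\}$ for every $t \in [0, b]$, hence a lower bound on $\mathrm{Vol}(\{u \le v(t)\})$ by Wulff-ball volumes.

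The second step integrates the eigenvalue equation on the sublevel set $\{u \le v(t)\}$. Applying the divergence theorem in Finsler form and using the Neumann condition $\langle F_\xi(\nabla u), \nu\rangle = 0$ to kill the $\partial\Omega$-boundary term yields
$$\int_{\{u = v(t)\}} \frac{F(\nabla u)}{|\nabla u|} \, d\mathcal{H}^{n-1} = -\lambda_1 \int_{\{u \le v(t)\}} u \, dx,$$
the exact analogue of the 1-D identity $t^{n-1} v'(t) = -\lambda_1 \int_0^t v(s) s^{n-1}\, ds$. Combining these two identities with the gradient comparison, the coarea formula, the Wulff-ball inclusion from the first step, and the mean-zero constraints $\int_\Omega u \, dx = 0 = \int_0^b v(s) s^{n-1}\, ds$ produces a Bishop--Gromov-type distribution-function comparison between $\mathrm{Vol}(\{u \le y\})$ and the model measure $\{v \le y\}$. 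This comparison, combined with the continuity properties of $\delta(a)$ and $m(a)$ from Lemma \ref{thm1}, forces $M \ge m$, contradicting the assumption.

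The main obstacle I expect is packaging the distribution-function comparison cleanly in the Finsler setting. The critical set $\{\nabla u = 0\}$ is nonempty in general and requires care with the coarea formula and with the regularity of level sets; one must also verify that the interplay between $F(\nabla u)$ (coming from the Finsler divergence theorem) and $|\nabla u|$ (coming from Euclidean coarea) is handled correctly. Beyond these technical points, however, the 1-D model, Bochner formula, and boundary identities all have exactly the same structural form as in the Laplacian case under the substitutions $|\nabla u| \to F(\nabla u)$ and $\partial_\nu u \to \langle F_\xi(\nabla u), \nu\rangle$, so the Bakry--Qian computation should carry over with no new ingredients beyond those established in Sections \ref{Pre} and \ref{NFG}.
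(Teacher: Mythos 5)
Your proposal takes essentially the same approach as the paper, which itself offers no independent argument but only states that the proof ``is similar to that of \cite{BQ}, Th.~11'' with Theorem~\ref{Comparison thm} as the essential ingredient --- precisely your outline: contradiction under $\max u<m$, gradient comparison, the Lipschitz map $v^{-1}\circ u$, Wulff-ball sublevel-set inclusion, and a distribution-function comparison from the Finsler divergence theorem and coarea. One correction to your sketch: the divergence-theorem identity should read $\int_{\{u=v(t)\}} F^2(\nabla u)/|\nabla u|\,d\mathcal{H}^{n-1}=-\lambda_1\int_{\{u\le v(t)\}} u\,dx$ rather than $F(\nabla u)/|\nabla u|$, because on the level set the outward normal is $\nu=\nabla u/|\nabla u|$ and $\langle F(\nabla u)F_\xi(\nabla u),\nu\rangle=F^2(\nabla u)/|\nabla u|$ by Euler's relation $F_{\xi_i}u_i=F$; the $F$-versus-$|\nabla u|$ tension you flag as the main obstacle then dissolves, since applying the gradient bound $F(\nabla u)\le v'(v^{-1}(u))$ once to each factor of $F$ recovers exactly the Riemannian-case inequality of Bakry--Qian.
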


The proof of Lemma \ref{thm2} is
 similar to that of \cite{BQ}, Th. 11. The essential part is the gradient comparison Theorem \ref{Comparison thm}. We omit it here.

\

Now we are in position to prove  Theorem \ref{main thm2}.

\

\noindent{\it Proof of Theorem \ref{main thm2}.}
Let $u$ be an eigenfunction with eigenvalue $\lambda_1$. Since $\int u=0$, we may assume $\min u=-1$ and $0\leq k= \max u\leq 1.$
Given a solution $v$ to (\ref{1-D model}), denote $m(a)=v(b(a))$ with $b(a)$ the first number with $v'(b(a))=0$ after $a$.

Lemma \ref{thm1} and \ref{thm2} imply that for any eigenfunction $u$, there exists a solution $v$ to (\ref{1-D model}) such that
$\min v=\min u=-1$ and $\max v=\max u=k\leq 1$.

We now get the expected estimate by using Theorem \ref{Comparison thm}.
Choosing $x_1,x_2\in \overline{\Omega}$ with $u(x_1)=\min u=-1, u(x_2)=\max u=k$ and $\gamma(t):[0,1]\to \overline{\Omega}$ the minimal geodesic from $x_1$ to $x_2$. Consider the subset $I$ of [0,1] such that $\frac{d}{dt}u(\gamma(t))\geq 0$. By the gradient comparison estimate (\ref{grad comparison}) and Lemma \ref{thm1}, we have
\begin{eqnarray*}d_F&\geq &  \int_{0}^1 F^0(\dot{\gamma}(t))dt\geq \int_I F^0(\dot{\gamma}(t))dt \\&\geq &\int_{0}^1 \frac{1}{F(\nabla u)}\langle\nabla u, \dot{\gamma}(t)\rangle
dt=\int_{-1}^k \frac{1}{F(\nabla u)}du\\&\geq &\int_{-1}^k \frac{1}{v'(v^{-1}(u))}du=\int_{a}^{b(a)} dt
=\delta(a)\geq \frac{\pi}{\sqrt{\lambda_1}},\end{eqnarray*}
which leads to $$\lambda_1\geq\frac{\pi^2}{d_F^2}.$$

\

We are remained to prove the equality case. The idea of proof follows from \cite{HW}.
Here we need to pay more attention on the points with vanishing  $\nabla u$.

Assume that $\lambda_1=\frac{\pi^2}{d_F^2}$. It can be easily seen from the proof of  Theorem \ref{main thm2} that $a=\infty$, which leads to $\max u=\max v=1$ by Lemma \ref{thm1}.   We will prove that $\Omega$ is in fact a segment in $\mathbb{R}$.
We divide the proof into several steps.

\

\noindent{\bf Step 1:}  $S:=\{x\in\overline{\Omega}|u(x)=\pm 1\}\subset \partial\Omega$.

Let $\mathcal{P}=F(\nabla u)^2+\lambda_1u^2$. After a simple calculation by using Bochner formula (\ref{Bochner}) and Kato inequality (\ref{Kato}), we obtain
\begin{eqnarray*}
\frac12 a_{ij}\mathcal{P}_{ij}&=& a_{ij}a_{kl}u_{ik}u_{jl}-\frac12 a_{ijl}u_{ij} \mathcal{P}_l-\lambda_1^2u^2\\&\geq & a_{ij}F_kF_l u_{ik}u_{jl}-\frac12 a_{ijl}u_{ij} \mathcal{P}_l-\lambda_1^2u^2\\&= &
-\frac12 a_{ijl}u_{ij} \mathcal{P}_l+\frac{1}{4F^2}(a_{ij}\mathcal{P}_i\mathcal{P}_j-4\lambda_1 uu_i\mathcal{P}_i) \hbox{ on }\Omega\setminus \mathcal{C}.
\end{eqnarray*}
Namely,
\begin{eqnarray}\label{eqn1}
\frac12 a_{ij}\mathcal{P}_{ij}+b_i\mathcal{P}_i\geq 0 \hbox{ on }\Omega\setminus \mathcal{C}
\end{eqnarray}
for some $b_i\in C^0(\Omega)$.
If $\mathcal{P}$ attains its maximum on $x_0\in\partial\Omega$, then arguing as in Theorem \ref{Comparison thm},  we have that $\nabla \mathcal{P}(x_0)=0$. However, from the Hopf Theorem, $\nabla \mathcal{P}(x_0)\neq 0$, a contradiction. Hence  $\mathcal{P}$ attains its maximum at $\mathcal{C}$, and therefore,
\begin{eqnarray}\label{eqn2}
\mathcal{P}\leq \lambda_1.
\end{eqnarray}
Take any two points $x_1,x_2\in S$ with $u(x_1)=-1, u(x_2)=1$. Let $$\gamma(t)=\left(1-\frac{t}{F^0(x_2-x_1)}\right)x_1+\frac{t}{F^0(x_2-x_1)}x_2 :[0,l]\to \overline{\Omega}$$ be the straight line from $x_1$ to $x_2$, where $l:=F^0(x_2-x_1)$ is the distance from  $x_1$ to $x_2$ with respect to $F$. Denote $f(t):=u(\gamma(t))$. It is easy to see $F^0(\dot{\gamma}(t))=1$. It follows from (\ref{eqn2}) and Cauchy-Schwarz inequality (\ref{CS}) that
\begin{eqnarray}\label{eqn3}
|f'(t)|=|\nabla u(\gamma(t))\cdot \dot{\gamma}(t)|\leq F(\nabla u)(\gamma(t))\leq \sqrt{\lambda_1(1-f(t)^2)}.
\end{eqnarray}
Here we have used the Cauchy-Schwarz inequality (\ref{CS}) again.
Hence
\begin{eqnarray}\label{eqn4}
d_F\geq l &\geq &\int_{\{0\leq t\leq l,\, f'(t)>0\}} dt\geq \int_0^l\frac{1}{\sqrt{\lambda_1}}\frac{f'(t)}{\sqrt{1-f(t)^2}} dt\nonumber\\&=&\frac{1}{\sqrt{\lambda_1}}\int_{-1}^1\frac{1}{\sqrt{1-x^2}}dx=\frac{\pi}{\sqrt{\lambda_1}}.
\end{eqnarray}
Since $d_F=\frac{\pi}{\sqrt{\lambda_1}}$, we must have $d_F= l$, which means $S\subset \partial\Omega$.

\

\noindent{\bf Step 2:}  $\mathcal{P}=F^2(\nabla u)+\lambda_1 u^2\equiv\lambda_1$ in $\overline{\Omega}$, hence $S\equiv \mathcal{C}$.

Indeed, from Step 1, we know that $\Omega^*:=\overline{\Omega}\setminus S$ is connected. Let $E:=\{x\in\Omega^*:\mathcal{P}=\lambda_1\}$. It is clear that $E$ is closed.
In view of (\ref{eqn1}), thanks to the strong maximum principle we know that  $E$ is also open. we now show that $E$ is nonempty. Indeed, from the fact that all inequalities  in (\ref{eqn3}) and (\ref{eqn4}) are equality, we obtain $f(t)=u(\gamma(t))=-\cos \sqrt{\lambda_1}t$ for $t\in(0,l)$.
Hence  $$\mathcal{P}(\gamma(t))= f'(t)^2+\lambda_1f(t)^2=\lambda_1.$$ Thus $E$ is nonempty, open, closed in $\Omega^*$. Therefore, we obtain $\mathcal{P}\equiv\lambda_1$ in $\overline{\Omega}$ (for $x\in S$, $\mathcal{P}=\lambda_1$ is obvious).

\

\noindent{\bf Step 3:} Define $X=\frac{\nabla u}{F(\nabla u)}$ in $\Omega^*$ and $X^*$ the cotangent vector given by $X^*(Y)=\langle X,Y\rangle$ for any tangent vector $Y$. Then in $\Omega^*$, we claim that
\begin{eqnarray}\label{eqn5}
D^2u=-\lambda_1 u X^*\otimes X^*,
\end{eqnarray}
and moreover $X=\overrightarrow c$ for some constant vector $\overrightarrow c$.

First, taking derivative of  $F^2(\nabla u)+\lambda_1 u^2\equiv\lambda_1$ gives
\begin{eqnarray}\label{eqn8}
F_iF_ju_{ij}=-\lambda_1 u.
\end{eqnarray}
On the other hand, since $\mathcal{P}\equiv \lambda_1$, the proof of (\ref{eqn1}) leads to
\begin{eqnarray}\label{eqn6}
a_{ij}a_{kl}u_{ik}u_{jl}=\lambda_1^2 u^2= (F_iF_ju_{ij})^2.
\end{eqnarray}
(\ref{eqn6}) in fact gives that
\begin{eqnarray}\label{eqn7}
F_{ij}F_{kl}u_{ik}u_{jl}=0.
\end{eqnarray}
Set $X^\perp:=\{V\in \mathbb{R}^n|V\perp X\}.$ $X^\perp$ is an $(n-1)$-dim vector subspace. Note that
$(F_{ij})$ is exactly matrix of rank $n-1$ (see the proof of Lemma \ref{basic lem}) and $F_{ij}X^j=0$. It follows from this fact and (\ref{eqn7}) that
\begin{eqnarray}\label{eqn9}
u_{ij}V^iV^j=0 \hbox{ for any }V\in X^\perp.
\end{eqnarray}
(\ref{eqn8}) and (\ref{eqn9}) imply (\ref{eqn5}), which in turn
implies \begin{eqnarray}\label{eqn10}
u_{ij}=\frac{-\lambda_1uu_iu_j}{F^2(\nabla u)}.
\end{eqnarray}
By differentiating $X$, we obtain from (\ref{eqn10}) that
\begin{eqnarray*}
\nabla_i X^j=\frac{u_{ij}}{F(\nabla u)}-\frac{u_j}{F^2(\nabla u)}F_ku_{ki}=0.
\end{eqnarray*}
Thus $X=\overrightarrow{c}$ in $\Omega^*$.

\

\noindent{\bf Step 4:} The maximum point and the minimum point are unique.

We already knew that $f(t)=u(\gamma(t))=-\cos\sqrt{\lambda_1}t$ and  $\nabla u(\gamma(t))\neq 0$ for $t\in(0,l)$. Hence $u$ is $C^2$ along $\gamma(t)$ for $t\in(0,l)$ and it follows that
\begin{eqnarray}\label{eqn11}
D^2u\left(\dot{\gamma}(t),\dot{\gamma}(t)\right)\bigg|_{\gamma(t)}=\lambda_1 \cos t \hbox{ for any }t\in(0,l).
\end{eqnarray}
On the other hand,  we deduce from (\ref{eqn5}) that
\begin{eqnarray}\label{eqn12}
D^2u\left(\dot{\gamma}(t),\dot{\gamma}(t)\right)\bigg|_{\gamma(t)}=-\lambda_1 u(\gamma(t))\langle X,\dot{\gamma}(t) \rangle^2.
\end{eqnarray}
Combining (\ref{eqn11}) and (\ref{eqn12}), taking $t\to 0$, we get \begin{eqnarray*}
|\langle X,\dot{\gamma}(t) \rangle |=1=F(X)F^0(\dot{\gamma}(t)),
\end{eqnarray*}
which means equality in Cauchy-Schwarz inequality (\ref{CS}) holds. Hence $X=\pm F_\xi^0(\dot{\gamma}(t))$.
Noting that $\dot{\gamma}(t)=\frac{x_2-x_1}{F^0(x_2-x_1)}$, we have $$X=F_\xi^0(x_2-x_1).$$
Suppose there is some point $x_3$ with $u(x_3)=1$, using the same argument, we obtain $X=F_\xi^0(x_3-x_1).$ In view of $F^0(x_3-x_1)=F^0(x_2-x_1)$, we conclude $x_3=x_2$. Therefore, there is only one maximum point as well as one minimum point.

\

\noindent{\bf Step 5:}  $\Omega$ is a segment in $\mathbb{R}$.

Suppose $\Omega\subset\mathbb{R}^n$ for $n\geq 2$. We see from Step 4 that for most of points of $\partial\Omega$, $\nabla u\neq 0$, and at these points  $X=\frac{\nabla u}{F(\nabla u)}$ lies in the tangent spaces due to the Neumann boundary condition, which is impossible because $X$ is a constant vector, a contradiction. We complete the proof.\qed

\

\section{Estimate of the first Dirichlet eigenvalue}\label{DFG}

As in Section \ref{NFG}, for simplicity, we write $\lambda_1$ instead of $\lambda_1^D$ through this section.

It is well-known that the existence of first Dirichlet  eigenfunction can be easily proved by using the direct method in the calculus of variations. Moreover, by the assumption that $F$ is even,
the first Dirichlet  eigenfunction $u$  does not change sign (see \cite{BFK}, Th. 3.1). We may assume $u$ is non-negative.
By multiplying $u$ by a constant, we can also assume that $\sup_\Omega u=1$ and $\inf_\Omega u=0$  without loss of generality.% Set $v=\frac{1}{1+\varepsilon}\frac{u-(1-k)/2}{(1+k)/2}$ and $a_\varepsilon=\frac{1-k}{1+k}\frac{1}{1+\varepsilon},$ then $v$ satisfies
%\begin{equation}\label{first eigenvalue1}\left\{
%\begin{array}{rccl}
%-Qv&=&\lambda_1 (v+a_\varepsilon) &\hbox{ in } \Omega\\
%\frac{\partial v}{\partial \nu}&=&0 &\hbox{ on } \partial \Omega\end{array}
%\right.
%\end{equation}
%and $\sup_\Omega v=\frac{1}{1+\varepsilon}$, $\inf_\Omega v=-\frac{1}{1+\varepsilon}$.

For any $\alpha,\beta\in \mathbb{R}$ with $\alpha> 0, \beta^2>\sup (\alpha+ u)^2$, consider function $$P(x)=\frac{F^2(\nabla u)}{2(\beta^2-(\alpha+u)^2)}.$$ Suppose that $P(x)$ attains its maximum at $x_0\in \overline{\Omega}$.

With the assumption that $\Omega$ is $F$-mean convex, we first exclude the possibility $x_0\in \partial \Omega$ with $\nabla u(x_0)\neq 0$.
Indeed, suppose we have $x_0\in\partial \Omega$ with $\nabla u(x_0)\neq 0$. Define $\nu_F:=F_\xi(\nu)$  on $\partial \Omega=\{x\in\overline{\Omega}|u(x)=0\}.$ In view of  $\langle\nu_F,\nu\rangle=F(\nu)>0$, $\nu_F$ must point outward. From the Dirichlet boundary condition, we know $\nu=-\frac{\nabla u}{|\nabla u|}$ for $\nabla u\neq 0$. Hence $\nu_F=-F_\xi(\nabla u)$.
Since $P$ attains maximum at $x_0$, we have
\begin{eqnarray*}0\leq\frac{\partial P}{\partial \nu_F}(x_0)&=&\frac{FF_iu_{ij}\nu_F^j}{\beta^2-(\alpha+u)^2}+F^2\frac{\alpha \frac{\partial u}{\partial\nu_F}}{(\beta^2-(\alpha+u)^2)^2}\end{eqnarray*}
Hence \begin{eqnarray*}-\frac{\partial^2 u}{\partial \nu_F^2}+\frac{F\alpha\frac{\partial u}{\partial\nu_F}}{\beta^2-\alpha^2}\geq 0.\end{eqnarray*}
Note that $\frac{\partial u}{\partial\nu_F}=-F(\nabla u)$. Since $\partial \Omega$  itself is  a level set of $u$, we can apply Lemma \ref{F mean curvature} to obtain
$$\frac{\partial^2 u}{\partial \nu_F^2}=Qu+FH_F.$$ In view of $Qu(x_0)=-\lambda_1 u(x_0)=0$, we obtain that
$$-FH_F-F^2\frac{\alpha}{\beta^2-\alpha^2}\geq 0.$$
This contradicts the fact that $H_F(\partial\Omega)\geq 0$.

On the other hand, if  $\nabla u(x_0)=0$, then $F(\nabla u)(x_0)=0$ and $P(x_0)=0$ which implies
$F(\nabla u)=0$, i.e., $u$ is constant, a contradiction.

Therefore we may assume $x_0\in\Omega$ and $\nabla u(x_0)\neq 0$.
%Assume first that $x_0\in\Omega$ and $\nabla u(x_0)\neq 0$.% Let $$a_{ij}(x)=\frac{\partial^2}{\partial \xi_i \partial \xi_j}\left(\frac12 F^2\right)(\nabla u(x)),\quad a_{ijk}(x)=\frac{\partial^3}{\partial \xi_i \partial \xi_j \partial \xi_k }\left(\frac12 F^2\right)(\nabla u(x)).$$
 Since $a_{ij}$ is positively definite on $\overline{\Omega}\setminus\mathcal{C}$, where $\mathcal{C}:=\{x|\nabla u(x)=0\}$, it follows from the maximum principle that
\begin{equation} \label{gradient}P_i(x_0)=0,\end{equation}
\begin{equation}\label{second derivative}a_{ij}(x_0) P_{ij}(x_0)\leq 0.\end{equation}
From now on we will compute at the point $x_0$.
Equality (\ref{gradient}) gives
\begin{equation} \label{gradient1}\frac{\partial}{\partial x_i}\left(\frac12 F^2(\nabla u)\right)=-\frac{F^2(\nabla u) (\alpha+u) u_i}{\beta^2-(\alpha+u)^2}.\end{equation}
Then we compute  $a_{ij}(x_0) P_{ij}(x_0)$.
\begin{eqnarray*}a_{ij}(x_0) P_{ij}(x_0)=&&\frac{1}{\beta^2-(\alpha+u)^2}a_{ij}\frac{\partial^2}{\partial x_i\partial x_j}\left(\frac12 F^2(\nabla u)\right)\\&&+2a_{ij}\frac{\partial}{\partial x_i}\left(\frac12 F^2(\nabla u)\right)\frac{\partial}{\partial x_j}\left(\frac{1}{\beta^2-(\alpha+u)^2}\right)\\&&+a_{ij}\frac{\partial^2}{\partial x_i\partial x_j}\left(\frac{1}{\beta^2-(\alpha+u)^2}\right)\frac12 F^2(\nabla u)\\&=& I+II+III.\end{eqnarray*}

By using (\ref{gradient1}), (\ref{homogeneity}), Bochner formula (\ref{Bochner}) and equation (\ref{first eigenvalue}), we obtain
\begin{eqnarray}\label{I}I=\frac{1}{\beta^2-(\alpha+u)^2}\left[ a_{ij}a_{kl}u_{ik}u_{jl}-\lambda_1F^2\right],\end{eqnarray}
\begin{equation}\label{II}II%=2a_{ij}\frac{-F^2 (\alpha+u) u_i}{\beta^2-(\alpha+u)^2}\frac{2(\alpha+u) u_i}{(\beta-(\alpha+u)^2)^2}
=-\frac{4F^4(\alpha+u)^2}{(\beta^2-(\alpha+u)^2)^3},\end{equation}
\begin{eqnarray}\label{III}III%&=&a_{ij}\left[\frac{2u_iu_j+2(\alpha+u) u_{ij}}{(\beta-(\alpha+u)^2)^2}-\frac{8(\alpha+u)^2u_iu_j}{(\beta-(\alpha+u)^2)^3}\right]\frac12 F^2(\nabla u)\nonumber\\
=\frac{F^4}{(\beta^2-(\alpha+u)^2)^2}+\frac{4F^4(\alpha+u)^2}{(\beta^2-(\alpha+u)^2)^3}-\frac{\lambda_1 F^2u(\alpha+u)}{(\beta^2-(\alpha+u)^2)^2}.
\end{eqnarray}
We now apply Lemma \ref{lem1} to (\ref{I}) and obtain
\begin{eqnarray*} a_{ij}a_{kl}u_{ik}u_{jl}&\geq &  a_{ij}F_kF_lu_{ik}u_{jl}\\
&=&\frac{1}{F^2}a_{ij}\frac{\partial }{\partial x_i}\left(\frac12 F^2(\nabla u)\right)\frac{\partial }{\partial x_j}\left(\frac12 F^2(\nabla u)\right)\\&=&\frac{F^4(\alpha+u)^2}{(\beta^2-(\alpha+u)^2)^2}.
\end{eqnarray*}
Here we have used (\ref{gradient1}) and (\ref{homogeneity}) again in the last equality.
Therefore, we have
\begin{eqnarray}\label{I1}I\geq \frac{F^4(\alpha+u)^2}{(\beta^2-(\alpha+u)^2)^3}-\frac{\lambda_1F^2}{\beta^2-(\alpha+u)^2}.\end{eqnarray}
Combining (\ref{second derivative}), (\ref{II}), (\ref{III}) and (\ref{I1}), we obtain
\begin{eqnarray*}0\geq a_{ij}P_{ij}\geq \frac{F^4\beta^2}{(\beta^2-(\alpha+u)^2)^3}-\frac{\lambda_1F^2}{\beta^2-(\alpha+u)^2}-\frac{\lambda_1 F^2u(\alpha+u)}{(\beta^2-(\alpha+u)^2)^2}.\end{eqnarray*}
It follows that
\begin{eqnarray}\label{grad est}\frac{F^2(\nabla u)}{\beta^2-(\alpha+u)^2}(x_0)\leq \frac{\lambda_1}{\beta^2}(\beta^2-\alpha(\alpha+u)).\end{eqnarray}

Noting that $\sup_\Omega u=1$ we  choose $\alpha>0$ and $\beta=\alpha+1$.  Then estimate (\ref{grad est}) becomes
\begin{eqnarray*}\frac{F^2(\nabla u)}{(\alpha+1)^2-(\alpha+u)^2}(x_0)\leq \lambda_1\left(1-\frac{\alpha(\alpha+u)}{(\alpha+1)^2}\right)\leq \lambda_1.\end{eqnarray*}
%We see that   (\ref{grad est1}) obviously holds for $\nabla u(x_0)=0$.
Hence we conclude, for any $x\in\overline{\Omega}$,
%\begin{eqnarray}\label{grad est1}F^2(\nabla v)\leq \lambda_1(1+a_\varepsilon )(\beta-(\alpha+u)^2).\end{eqnarray}
%Returning to $u$ and letting $\varepsilon\rightarrow 0$, we find
\begin{eqnarray}\label{grad est2}\frac{F^2(\nabla u)}{(\alpha+1)^2-(\alpha+u)^2}\leq \lambda_1.\end{eqnarray}

Choose $x_1\in\Omega$ with $u(x_1)=\sup u=1$ and $x_2\in \partial\Omega$ with $d_F(x_1,x_2)=d_F(x_1,\partial\Omega)\leq i_F$ and $\gamma(t):[0,1]\to \overline{\Omega}$ the minimal geodesic connected $x_1$ with $x_2$. Using the gradient estimates (\ref{grad est2}), we have
\begin{eqnarray*}\frac{\pi}{2}- \arcsin (\frac{\alpha}{\alpha+1}) &=&\int_0^1 \frac{1}{\sqrt{(\alpha+1)^2-(\alpha+u)^2 }} du \leq \sqrt{\lambda_1}\int_0^1 \frac{1}{F(\nabla u)} du \\& \leq &\sqrt{\lambda_1}\int_0^1 \frac{1}{F(\nabla u(\gamma(t)))}\langle\nabla u(\gamma(t)), \dot{\gamma}(t)\rangle dt\\&\leq &\sqrt{\lambda_1}\int_0^1 F^0(\dot{\gamma}(t)) dt\leq \sqrt{\lambda_1} i_F.\end{eqnarray*}
Here we have used the Cauchy-Schwarz inequality (\ref{CS}).
Letting $\alpha\to 0$, we obtain
\begin{eqnarray*}
\lambda_1\geq \frac{\pi^2}{4i_F^2}.\end{eqnarray*} Thus we finish the proof of Theorem \ref{main thm}.

%We are now remained with the case that the maximal point $x_0\in\partial\Omega$. We will prove that this cannot happen when $\partial\Omega$ is convex. Suppose it happens, choose the coordinate $\{e_1,e_2,\cdots,e_n\}$ around %$x_0$ such that $\{e_i\}_{i=1}^{n-1}$ are principal directions at $x_0$ and $e_n=\nu$. Since $P(x)$ attains maximum at $x_0$, we know
%\begin{eqnarray*}0\leq\frac{\partial P}{\partial \nu}(x_0)=\frac{\sum_{i=1}^n FF_iu_{in}}{1-v^2}+\frac{vv_nF^2}{(1-v^2)^2}=\frac{\sum_{i=1}^{n-1} FF_iu_{in}}{1-v^2}\end{eqnarray*}
%We used the Neumann boundary condition in the last equality.

\

\end{document}